\renewcommand{\algorithmiccomment}[1]{\bgroup\hfill//~#1\egroup}
\newtheorem{theorem}{Theorem}[section]
\newtheorem{lemma}{Lemma}[section]
\newtheorem{remark}{Remark}[section]
\newcommand\R{\mathbb{R}}
\newcommand\Rg{\mathcal{R}}
\newcommand\T{\mathcal{T}}
\newcommand\Z{\mathbb{Z}}
\newcommand\E{\mathcal{E}}
\newcommand\del{\delta}
\newcommand{\<}{\langle}
\renewcommand{\>}{\rangle}
\newcommand{\res}[1]{{\rm R}[{#1}]}
\newcommand{\Mrefine}{\mathcal{M}_{\rm r}}
\definecolor{deeppink}{rgb}{1.0, 0.08, 0.58}
\newcommand{\mymat}[1]{\left[ \begin{matrix} #1 \end{matrix} \right]}
\def\XXint#1#2#3{{\setbox0=\hbox{$#1{#2#3}{\int}$ }
\vcenter{\hbox{$#2#3$ }}\kern-.6\wd0}}
\def\b{\big}
\def\R{\mathbb{R}}
\def\Z{\mathbb{Z}}
\def\DD{{D}'}
\def\<{\langle}
\def\>{\rangle}
\def\mA{{\sf A}}
\def\del{\delta}
\def\L{\Lambda}
\def\Usz{\Us_0}
\def\E{\mathscr{E}}
\def\L{\Lambda}
\def\rcut{r_{\rm cut}}
\def\Rg{\mathscr{R}}
\def\T{\mathcal{T}}
\titleformat{\subsubsection}[runin]{\bf }{
  \arabic{section}.\arabic{subsection}.\arabic{subsubsection}}{
  0.5em}{}[.]
\title{Adaptive QM/MM Coupling for Crystalline Defects
	\footnote{
	HC was partially supported by Thousand Talents Program for Young Professionals, and the Fundamental Research Funds for the Central Universities of China under grant 2017EYT22. ML was partially supported by NSFC grant 91430106, 11771040. HW was partially supported by NSFC grant 11501389, 11471214. YW and LZ were supported by NSFC grant 11471214, 11871339, 11571314.
	}}
\author{
	Huajie Chen
	\footnote{{\tt chen.huajie@bnu.edu.cn}.
	School of Mathematical Sciences, Beijing Normal University.
	},
	Mingjie Liao
	\footnote{{\tt mliao@xs.ustb.edu.cn}.
	Corresponding author.
	Department of Applied Mathematics and Mechanics, University of Science and Technology Beijing.
	}, 
	Hao Wang
	\footnote{{\tt wangh@scu.edu.cn}.
	School of Mathematics, Sichuan University. 
	}, 
	Yangshuai Wang 
	\footnote{{\tt yswang2016@sjtu.edu.cn}
	Institute of Natural Sciences, School of Mathematical Sciences,
  	and Ministry of Education Key Laboratory of Scientific and Engineering Computing (MOE-LSC),
  	Shanghai Jiao Tong University. 
	},
	and Lei Zhang
	\footnote{{\tt lzhang2012@sjtu.edu.cn}
	Institute of Natural Sciences, School of Mathematical Sciences,
  	and Ministry of Education Key Laboratory of Scientific and Engineering Computing (MOE-LSC),
  	Shanghai Jiao Tong University. 
	}
}
\date{}
\begin{document}
\maketitle

\begin{abstract}
	QM (quantum mechenics) and MM (molecular mechenics) coupling methods are widely 
	used in simulations of crystalline defects. In this paper, we construct a residual based 
	{\it a posteriori} error indicator for QM/MM coupling approximations.
    We prove the reliability of the error indicator (upper bound of the true approximation error) and develop some sampling techniques for its efficient calculation. 
    Based on the error indicator and D\"{o}rfler marking strategy, 
    we design an adaptive QM/MM algorithm for crystalline defects
    and demonstrate the efficiency with some numerical experiments.
\end{abstract}

{\bf keywords.} qm/mm coupling, a posteriori error estimate, adaptive algorithm, crystal defects

{\bf AMS subject classifications.} 65N12, 65N15, 82D25, 81V45

\section{Introduction}
\label{sec:introduction}

Quantum mechanics and molecular mechanics (QM/MM) coupling methods have been widely used for simulations of large systems in materials science and biology \cite{bernstein09, csanyi04, gao02,	kermode08, ogata01, warshel76, zhang12}.  A QM model is required to accurately treat 
bond breaking/formation, charge transfer, electron excitation and other electronic processes. 
However, the QM calculations can only be applied to systems with hundreds/thousands of
atoms due to their demanding computational cost. By contrast, MM methods based on empirical inter-atomic potentials are able to treat millions of atoms or more, but with reduced accuracy and transferablity (MM can be very accurate at reference configurations or near equilibrium, but may have significant error for general configurations). 
QM/MM coupling methods promise (near-)QM accuracy at (near-)MM computational cost for large-scale atomistic simulations.

In QM/MM simulations, the computational domain is partitioned into QM and MM regions.
The region of primary interest is described by a QM model, and the QM region is embedded in an ambient environment (e.g., bulk crystal) that is described by an MM model.
Some coupling/embedding schemes are applied to link the QM and MM regions.
A natural and fundamental question is how to assign each atom (site) to QM or MM subsystems, in order to achieve the optimal balance between accuracy and computational cost. 
Even for static problems this is not straightforward: we should include the active sites in the QM region when the region of interest is fairly localized and relatively well separated from the environment, however, how to find an optimal partition such that the computational cost can be optimized without loss of accuracy remains unclear. For dynamic problems, this could be more challenging since some sites need to be reassigned as the environments evolve (see, e.g. \cite{csanyi04,duster17,kermode08}). 

The goal of the adaptive QM/MM method is to offer the capability of automatic partition of QM/MM subsystems on the fly according to the error distribution in the process of a simulation. 
This is a distinct advantage over conventional QM/MM method, where a static partition is prescribed for the QM and MM subsystems. 
The adaptive QM/MM method has been proposed in some applications, including the study of important molecular fragments in macromolecules, monitoring molecules entering/leaving binding sites, and tracking proton transfer via the Grotthuss mechanism (see \cite{duster17} and references therein). 
Because the size of the QM region can be set as small as possible (up to the accuracy requirement) in the adaptive QM/MM method, the computational costs can be controlled. Small QM subsystems also facilitate the utilization of high-level 
QM theory and make simulations on long time scales feasible, which may potentially lead to new insights on physical systems.

The efficiency of an adaptive algorithm is determined by the accuracy of {\it a posteriori} error indicator, which indicates the (QM/MM) classification criteria of the atomic sites. 
Despite various existing implementations of adaptive QM/MM coupling methods which mostly rely on empirical error indicators \cite{kerdcharoen1996, kerdcharoen2002, heyden2007, watanabe2014, waller2014, boereboom2016}, up to our best knowledge, we have not seen any rigorous {\it  a posteriori} error estimate for QM/MM coupling.
In fact, recent developments in a similar field, atomistic/continuum coupling methods for crystalline defects
(see, e.g. \cite{Abdulle:2013,arndtluskin07c,Ortner:qnl.1d,OrtnerWang:2014,prud06,Shenoy:1999a,Wang:2017, Liao2018}) have provided valuable insights also on the study of QM/MM methods.

The purpose of this paper is to construct a rigorously justifiable {\it a posteriori} error indicator that is an upper bound of the true error ({\it reliability}), and further design an adaptive QM/MM algorithm.
In this work, we use a prototypical QM/MM model as a proof of concept, with tight binding model as the QM model, and focus only on the static problems. 
We will investigate the adaptive QM/MM coupling with more realistic QM models such as density function theory (DFT) models and  study the dynamic problems in our future work.

\subsubsection*{Outline}

In Section \ref{sec:pre} we brifely describe the tight binding model and QM/MM coupling methods for crystalline defects. 
In Section \ref{sec:analysis}, we derive a residual based {\it a posteriori} error indicator for QM/MM coupling, prove its reliability, and further provide some sampling strategy to accelerate the evaluation of the error indicator.
In Section \ref{sec:adaptive}, we propose an adaptive QM/MM algorithm that automatically adjust the QM and MM regions on the fly according to the proposed {\it a posteriori} error indicator.
In Section \ref{sec:numerics}, we present several numerical experiments for point defects in two dimensional triangular lattice. 
In Section \ref{sec:conclusion}, we make concluding remarks and point out some promising directions for future work.

\subsubsection*{Notation}

We use the symbol $\langle\cdot,\cdot\rangle$ to denote an abstract duality
pairing between a Banach space and its dual space. The symbol $|\cdot|$ normally
denotes the Euclidean or Frobenius norm, while $\|\cdot\|$ denotes an operator
norm.
For the sake of brevity of notation, we will denote $A\backslash\{a\}$ by
$A\backslash a$, and $\{b-a~\vert ~b\in A\}$ by $A-a$.
For $E \in C^2(X)$, the first and second variations are denoted by
$\<\delta E(u), v\>$ and $\<\delta^2 E(u) v, w\>$ for $u,v,w\in X$.

For a finite set $A$, we will use $\#A$ to denote the cardinality of $A$.

The symbol $C$ denotes generic positive constant that may change from one line
of an estimate to the next. When estimating rates of decay or convergence, $C$
will always remain independent of the system size, the configuration of the lattice and the the test functions. The dependence of $C$ will be normally clear from the context or stated explicitly.

\section{Model set up}
\label{sec:pre}
\setcounter{equation}{0}

\subsection{The tight binding model and its site energy}
\label{sec:tb}

\def\Rc{R_{\rm cut}}
\def\Nn{N}

In this paper, we use the tight binding model as the quantum mechanical model, which is a ``minimalist" electronic structure model.
For simplicity of presentation, we consider a `two-centre' tight binding model \cite{goringe97,Papaconstantopoulos15} with a single orbital per atom and the identity overlap matrix.
All results in this paper can be extended directly to general non-self-consistent tight binding models, as described in~\cite[\S~2 and Appendix A]{chen15a}.

Consider a many-particle system consisting of $\Nn$ atoms.
Let $d\in\{2,3\}$ be the space dimension and $\Omega\subset\R^d$ be an {\it index set}  (or  {\it reference configuration}), with $\#\Omega=\Nn$.
An atomic configuration is a map $y : \Omega\to\R^d$ satisfying
\begin{equation} \label{eq:non-interpenetration}
|y(\ell)-y(k)| \geq \mathfrak{m}|\ell-k| \qquad\forall~\ell,k\in\Omega
\end{equation}
with {\em accumulation parameter} $\mathfrak{m} > 0$. In the following, we use $r_{\ell k}:=|y(\ell)-y(k)|$ for brevity of notation.
The `two-centre' tight binding model is formulated in terms of a discrete Hamiltonian, with the matrix elements
\begin{eqnarray}\label{tb-H-elements}
\Big(\mathcal{H}(y)\Big)_{\ell k}
=\left\{ \begin{array}{ll} 
h_{\rm ons}\left(\sum_{j\neq \ell}
\varrho\big(|y({\ell})-y(j)|\big)\right)
& {\rm if}~\ell=k \\[1ex]
h_{\rm hop}\big(|y(\ell)-y(k)|\big) & {\rm if}~\ell\neq k,
\end{array} \right.
\end{eqnarray}
where $h_{\rm ons} \in C^{\mathfrak{n}}([0, \infty))$ is the on-site term,
$\varrho \in C^{\mathfrak{n}}([0, \infty))$ represents the charge density
with $\varrho(r) = 0~\forall r\in[\Rc,\infty)$ and $\Rc>0$ stands for the cutoff radius, 
$h_{\rm hop} \in C^{\mathfrak{n}}([0, \infty))$ is the hopping term with
$h_{\rm hop}(r)=0~\forall r\in[\Rc,\infty)$.
Throughout this paper, we will assume that $\mathfrak{n}\geq 4$.

With the above tight binding Hamiltonian $\mathcal{H}$,
we can define the band energy of the system
\begin{eqnarray}\label{e-band}
E^\Omega(y)=\sum_{s=1}^N f(\varepsilon_s)\varepsilon_s,
\end{eqnarray}
where $(\varepsilon_s)_{s = 1}^N$ are the eigenvalues of $\mathcal{H}(y)$ with associated eigenvectors $\psi_s$ such that
\begin{eqnarray}\label{eigen-H}
\mathcal{H}(y)\psi_s = \varepsilon_s\psi_s\quad s=1,2,\cdots,N,
\end{eqnarray}
and $f$ is the Fermi-Dirac distribution function for the energy states of a system consisting of particles that obey the Pauli exclusion principle,
\begin{eqnarray}\label{fermi-dirac}
f(\varepsilon) = \left( 1+e^{(\varepsilon-\mu)/(k_{\rm B}T)} \right)^{-1}
\end{eqnarray}
with $\mu$ a fixed chemical potential, $k_{\rm B}$ the Boltzmann constant , and $T>0$ the temperature of the system.
We note that it is reasonable to fix the chemical potential $\mu$ in the thermodynamic limit of the
grand canonical ensemble of the electrons \cite{chen16}.

Following \cite{finnis03}, we can distribute the energy to each atomic site
\begin{eqnarray}\label{E-El}
E^\Omega(y)=\sum_{\ell\in\Omega} E_{\ell}^{\Omega}(y)
\qquad{\rm with}\qquad
E_{\ell}^\Omega(y) := \sum_{s}f(\varepsilon_s)\varepsilon_s
\left|[\psi_s]_{\ell}\right|^2,
\end{eqnarray}
which formally defines a site energy $E_{\ell}^{\Omega}(y)$. For the purpose of molecular modeling, we need to justify the {\it regularity and locality}, the {\it isometry and permutation invariance}, and the existence of {\it thermodynamic limit} for this site energy. 

Suppose $\L$ is a countable index set or reference configuration, and $\Omega\subset\Lambda$ is a finite subset.
We denote by $E_\ell^\Omega$ the site energy with respect to the subsystem $\Omega \subset \Lambda$. 
For a domain $A \subset \R^d$, we use the short-hand $E_\ell^{A} := E_\ell^{A \cap \Lambda}$. 

In the tight binding Hamiltonian \eqref{tb-H-elements}, the interaction range of each atom is uniformly localized, which satisfies the assumptions on Hamiltonian matrix elements in \cite{chen15a} (the interactions decays exponentially). 
Then the following lemma from \cite[Theorem 3.1 (i)]{chen15a} implies the existence of the {\it thermodynamic} limit of $E_\ell^{\Omega}$ as $\Omega \uparrow \Lambda$, and guarantees that $E_{\ell}^{\Omega}$ defined in \eqref{E-El} can be taken as a proper (approximate) site energy. 

\begin{lemma}\label{lemma-thermodynamic-limit}
If $y:\L\rightarrow\R^d$ is a configuration satisfying \eqref{eq:non-interpenetration}, then,
	\begin{itemize}
		\item[(i)] {\rm (regularity and locality of the site energy)}
		$E^{\Omega}_{\ell}(y)$ possesses $j$th order partial derivatives with
		$1 \leq j \leq \mathfrak{n}-1$, and there exist positive constants $C_j$ and $\eta_j$ such that
		\begin{eqnarray}\label{site-locality-tdl}
		\left|\frac{\partial^j E^{\Omega}_{\ell}(y)}{\partial [y(m_1)]_{i_1}
			\cdots\partial [y(m_j)]_{i_j}}\right|
		\leq C_j e^{-\eta_j\sum_{l=1}^j|y(\ell)-y(m_l)|}
		\end{eqnarray}
		with $m_k\in\Omega$ and $1\leq i_k\leq d$ for any $1\leq k\leq j$;
		
		\item[(ii)] {\rm (isometry and permutation invariance)}
		If $g:\R^d\rightarrow\R^d$ is an isometry, then
		$E^{\Omega}_{\ell}(y) = E^{\Omega}_{\ell}(g(y))$;
		If  $\mathcal{G}:\Omega\rightarrow\Omega$ is a permutation, then
		$E^{\Omega}_{\ell}(y) = E^{\mathcal{G}^{-1}(\Omega)}_{\mathcal{G}^{-1}
			(\ell)}(y\circ\mathcal{G})$;
		
		\item[(iii)] {\rm (thermodynamic limit)}
		$\displaystyle E_{\ell}(y):=\lim_{R\rightarrow\infty} E^{B_R(\ell)}_{\ell}(y)$
		exists and satisfies (i), (ii).
	\end{itemize}
\end{lemma}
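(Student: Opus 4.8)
The plan is to write the site energy as a diagonal matrix element of an analytic function of the Hamiltonian and then to control it through contour-integral and resolvent (Combes--Thomas) estimates; the positive temperature $T>0$ is exactly what makes the argument go through without any spectral-gap hypothesis. Part (ii) should be disposed of first, since it is purely algebraic. Every matrix element in \eqref{tb-H-elements} depends on the configuration only through the interatomic distances $r_{\ell k}=|y(\ell)-y(k)|$, and these are preserved by any isometry $g$; hence $\mathcal{H}(g(y))=\mathcal{H}(y)$ and every spectral quantity, in particular $E_\ell^\Omega$, is unchanged. A permutation $\mathcal{G}$ merely relabels the sites, conjugating $\mathcal{H}$ by the corresponding permutation matrix; this leaves the eigenvalues fixed and permutes the weights $|[\psi_s]_\ell|^2$, yielding the stated identity.

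For (i) and (iii), introduce $\mathfrak{g}(\varepsilon):=\varepsilon f(\varepsilon)$ and note that \eqref{E-El} is precisely the diagonal entry $E_\ell^\Omega(y)=\big[\mathfrak{g}(\mathcal{H}(y))\big]_{\ell\ell}$, so that $\mathfrak{g}(\mathcal{H})=\frac{1}{2\pi\i}\oint_{\mathscr{C}}\mathfrak{g}(z)(z-\mathcal{H})^{-1}\dd z$ for any contour $\mathscr{C}$ enclosing $\sigma(\mathcal{H})$ in the domain of analyticity of $\mathfrak{g}$. The Fermi--Dirac function is meromorphic with poles only at the Matsubara points $z=\mu+\i\pi k_{\rm B}T(2n+1)$, $n\in\Z$, hence analytic in the strip $|\mathrm{Im}\,z|<\pi k_{\rm B}T$, whose width is positive precisely because $T>0$. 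Since $h_{\rm ons},\varrho,h_{\rm hop}$ are bounded and finite-ranged, $\|\mathcal{H}(y)\|$ is bounded uniformly in $\Omega$ and in admissible $y$, so $\sigma(\mathcal{H})$ lies in a fixed real interval; we may therefore fix $\mathscr{C}$ enclosing this interval, lying in the strip of analyticity, and staying a fixed distance $\mathfrak{d}>0$ from the real axis (hence from $\sigma(\mathcal{H})$), independently of $\Omega$.

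On $\mathscr{C}$ a Combes--Thomas estimate yields $|[(z-\mathcal{H})^{-1}]_{\ell k}|\le C\,e^{-\gamma|y(\ell)-y(k)|}$ with $\gamma=\gamma(\mathfrak{d})>0$, where \eqref{eq:non-interpenetration} is used to convert combinatorial hopping distance into Euclidean distance. Differentiating the resolvent identity, $\partial_{y(m)}(z-\mathcal{H})^{-1}=(z-\mathcal{H})^{-1}(\partial_{y(m)}\mathcal{H})(z-\mathcal{H})^{-1}$, and iterating, the $j$th derivative becomes a finite sum of products of $j+1$ resolvents interleaved with the localized factors $\partial_{y(m_l)}\mathcal{H}$, each supported on bonds incident to $m_l$ and bounded by the $C^{\mathfrak n}$ regularity of the potentials. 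Inserting the Combes--Thomas bound into each resolvent factor, summing over intermediate indices, and integrating $\mathfrak{g}(z)$ over $\mathscr{C}$ produces \eqref{site-locality-tdl}, with differentiability up to order $\mathfrak{n}-1$ inherited from the smoothness of the potentials. I expect the main obstacle to be the combinatorial bookkeeping that turns the chain of single-bond decays into the clean product $e^{-\eta_j\sum_{l=1}^j|y(\ell)-y(m_l)|}$ with constants uniform in $\Omega$, together with verifying that the Combes--Thomas rate $\gamma$ can indeed be chosen independently of the system.

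Finally, (iii) follows from a Cauchy estimate along the same lines. For $R'>R$ the second resolvent identity gives $(z-\mathcal{H}^{B_{R'}(\ell)})^{-1}-(z-\mathcal{H}^{B_{R}(\ell)})^{-1}=(z-\mathcal{H}^{B_{R'}(\ell)})^{-1}\big(\mathcal{H}^{B_{R'}(\ell)}-\mathcal{H}^{B_{R}(\ell)}\big)(z-\mathcal{H}^{B_{R}(\ell)})^{-1}$, where the perturbation is supported on sites within a cutoff of $\partial B_R(\ell)$, i.e. at Euclidean distance $\gtrsim R$ from $\ell$. Taking the $\ell\ell$ entry and applying the resolvent decay on $\mathscr{C}$ to the two factors linking $\ell$ to that far region bounds $|E_\ell^{B_{R'}(\ell)}(y)-E_\ell^{B_{R}(\ell)}(y)|$ by $Ce^{-\eta R}$. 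Hence $\{E_\ell^{B_R(\ell)}(y)\}_R$ is Cauchy and $E_\ell(y)$ exists; the estimates in (i) are uniform in $R$ and pass to the limit, while the invariances in (ii) are preserved, which completes the argument.
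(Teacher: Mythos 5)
Your proposal is correct and follows essentially the same route as the source this paper relies on: the paper itself gives no proof of Lemma \ref{lemma-thermodynamic-limit}, citing it as \cite[Theorem 3.1]{chen15a}, and the argument there is precisely your strategy --- write $E_\ell^\Omega$ as a diagonal entry of an analytic function of $\mathcal{H}$ via a contour integral in the Fermi--Dirac analyticity strip (positive $T$ giving the strip width), apply Combes--Thomas resolvent decay together with the non-interpenetration condition \eqref{eq:non-interpenetration}, differentiate the resolvent identity for the locality bounds, and use a resolvent perturbation (with the finite subsystem embedded block-diagonally so the two Hamiltonians act on the same index set, a minor detail you elide) for the thermodynamic limit. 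No substantive gap; the points you flag as bookkeeping are exactly the technical content carried out in the cited reference.
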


For a finite subset $\Omega \subset \Lambda$, we define the (negative) force
\begin{eqnarray}\label{eq:force}
f^{\Omega}(y):=-\nabla E^{\Omega}(y),
\quad\text{and in component notation,} \quad
\big[f_{\ell}^{\Omega}(y)\big]_i =
-\frac{\partial E^{\Omega}(y)}{\partial [y(\ell)]_i}
\quad 1\leq i\leq d.
\quad
\end{eqnarray}
Using \eqref{E-El}, we have
\begin{eqnarray}\label{Fl-El}
\big[ f_{\ell}^{\Omega}(y) \big]_i = -\sum_{k\in\Omega}
\frac{\partial E_k^{\Omega}(y)}{\partial [y(\ell)]_i},
\end{eqnarray}
which, together with Lemma \ref{lemma-thermodynamic-limit}, yields the
thermodynamic limit of the force $f_{\ell}(y)$, as well as its regularity, locality, and isometry/permutation invariance.

\subsection{Variational formulation for crystalline defects}
\label{sec:defects}

\def\Rdef{R^{\rm def}}
\def\Rg{\mathcal{R}}
\def\Rgnn{\mathcal{N}}
\def\rcut{R_{\rm c}}
\def\Lhom{\L^{\rm hom}}
\def\Ddef{D^{\rm def}}
\def\Ldef{\L^{\rm def}}
\def\Rcore{R_{\rm DEF}}
\def\Adm{{\rm Adm}}
\def\E{\mathcal{E}}
\def\L{\Lambda}
\def\UsH{{\mathscr{U}}^{1,2}}
\def\Usz{{\mathscr{U}^{\rm c}}}
\def\DD{{\sf D}}
\def\ee{{\sf e}}

A rigorous framework for modelling the geometry equilibration of crystalline defects has been developed in 
\cite{chenpre_vardef,2013-defects}, which formulates the equilibration of crystal defects as a variational problem in a discrete energy space, and establishes qualitatively sharp far-field decay estimates for the corresponding equilibrium configuration.
We emphasize that these results rely heavily on a ``locality" assumption of the models, which has been shown for tight binding model in Lemma \ref{lemma:regularity}. 
For sake of simplicity, we only present results on point defects here. All analysis and algorithms can be generated to straight dislocations (see \cite{chenpre_vardef,ehrlacher13}).

Given $d \in \{1, 2, 3\}$, $\mA \in \R^{d \times d}$ non-singular,  $\Lhom := \mA \Z^d$ is the 
homogeneous reference lattice which represents a perfect single lattice crystal formed by identical atoms 
and possessing no defects. 
$\L\subset \R^d$ is the reference lattice with some local defects. The mismatch between $\L$ and $\Lhom$ 
represents possible defects, which are contained in some localized defect cores.
The generalization to multiple defects is straightforward.
For simplicity, we assume the defects are near the origin and 
\begin{eqnarray}\label{ass:ref_config}
\L \setminus B_{\Rcore} = (A \Z^d) \setminus B_{\Rcore}
\end{eqnarray}
with $\Rcore\geq 0$.
For analytical purposes, we assume that there exits a regular partition $\mathcal{T}_{\Lambda}$ of $\R^d$ 
into triangles if $d=2$ and tetrahedra if $d=3$, whose nodes are the reference sites $\Lambda$.

Recall that the deformed configuration of the infinite lattice $\L$ is a map $y: \L\rightarrow\R^d$, which can be decomposed as
\begin{eqnarray}\label{y-u}
y(\ell) = \ell + u(\ell)  \qquad\forall~\ell\in\Lambda
\end{eqnarray}
with $u:\Lambda\rightarrow\mathbb{R}^d$ the displacement with respect to the reference configuration $\L$.

If $\ell\in\Lambda$ and $\ell+\rho\in\Lambda$, then we define the finite difference
$D_\rho u(\ell) := u(\ell+\rho) - u(\ell)$. For a subset $\Rg \subset \Lambda-\ell$, we
define $D_\Rg u(\ell) := (D_\rho u(\ell))_{\rho\in\Rg}$,
and $Du(\ell) := D_{\Lambda-\ell} u(\ell)$.
For $\gamma > 0$ we define the (semi-)norms
\begin{eqnarray*}
	\big|Du(\ell)\big|_\gamma := \bigg( \sum_{\rho \in \L-\ell} e^{-2\gamma|\rho|}
	\big|D_\rho u(\ell)\big|^2 \bigg)^{1/2}
	\quad{\rm and}\quad
	\| Du \|_{\ell^2_\gamma} := \bigg( \sum_{\ell \in \L}
	|Du(\ell)|_\gamma^2 \bigg)^{1/2}.
\end{eqnarray*}
All (semi-)norms $\|\cdot\|_{\ell^2_\gamma}, \gamma > 0,$ are equivalent, see \cite{ortner12} (also \cite[Appendix A]{chen15b}).
We can now define the natural function space of finite-energy displacements,
\begin{displaymath}
\UsH(\L) := \big\{ u : \L \to \R^d, \| Du \|_{\ell^2_\gamma} < \infty \big\}.
\end{displaymath}
We denote $\UsH(\L)$ by $\UsH$ whenever it is clear from the context, .

Let $E_\ell$ denote the site energy we defined in Lemma \ref{lemma-thermodynamic-limit} (iii). 
Due to its translation invariance, we define $V_{\ell} : (\R^d)^{\L-\ell}\rightarrow\R$ by
\begin{eqnarray}
V_{\ell}(Du) := E_{\ell}(x_0+u) \qquad{\rm with}\quad
x_0:\L\rightarrow\R^d ~~{\rm and}~~ x_0(\ell)=\ell~~\forall~\ell\in\L.
\end{eqnarray}
For a displacement $u$ with $x_0+u$ satisfying \eqref{eq:non-interpenetration},  we can formally define the energy-difference functional
\begin{eqnarray}\label{energy-difference}
\mathcal{E}(u) := \sum_{\ell\in\Lambda}\Big(E_{\ell}(x_0+u)-E_{\ell}(x_0)\Big) 
= \sum_{\ell\in\Lambda}\Big(V_{\ell}(Du(\ell))-V_{\ell}(\pmb{0})\Big).
\end{eqnarray}
It was shown in \cite[Theorem 2.7]{chenpre_vardef} (see also \cite{2013-defects}) that, 
if $\del\E(0) \in (\UsH)^*$, then $\E$ is well-defined on the space $\Adm_0$ and in fact
$\E \in C^{\mathfrak{n}-1}(\Adm_0)$, where
\begin{displaymath}
\Adm_{\frak{m}}(\L) := \big\{ u \in \UsH(\L), ~
|x_0(\ell)+u(\ell)-x_0(m)-u(m)| > \frak{m} |\ell-m|
\quad\forall~  \ell, m \in \L
\big\}.
\end{displaymath}
Whenever it is clear from the context, we will denote $\Adm_{\frak{m}}(\L)$ by $\Adm_{\frak{m}}$.
Let $\Adm_0=\cup_{\mathfrak{m}>0}\Adm_{\mathfrak{m}}$.
Due to the decay imposed by the condition $u\in\UsH$, any displacement $u\in\Adm_0$ belongs to $\Adm_{\mathfrak{m}}$ with some constant $\mathfrak{m}>0$.

To this end, we can rigorously formulate the variational problem for the equilibrium state as,
\begin{equation}\label{eq:variational-problem}
\bar u \in \arg\min \big\{ \E(u), u \in \Adm_0 \big\},
\end{equation}
where ``$\arg\min$'' is understood as the set of local minima.
Equivalently, the minimizer $\bar{u}$ satisfies the following first and second order optimality conditions
\begin{eqnarray}
\label{eq:optimality-1}
\b\< \delta\E(\bar{u}) , v\b\> = 0 , \qquad
\b\< \delta^2\E(\bar{u}) v , v\b\> \geq 0,
& \qquad\forall~v\in\UsH .
\end{eqnarray}

Alternatively, we may consider the force equilibrium formulation instead of the energy minimization formulation:
\begin{eqnarray}\label{eq:problem-force}
{\rm Find}~ \bar{u} \in \Adm_0, ~~ {\rm s.t.} \quad
f_{\ell}(\bar{u}) = 0 \qquad \forall~\ell\in\Lambda ,
\end{eqnarray}
where
\begin{eqnarray}\label{eq:force-Du}
f_{\ell}(u) = -\nabla_{\ell} \E(u)   = - \sum_{\rho\in\ell-\L}
V_{\ell-\rho,\rho}\big(Du(\ell-\rho)\big) + \sum_{\rho\in\L-\ell}  V_{\ell,\rho}\big(Du(\ell)\big). \qquad
\end{eqnarray}
Note that any minimizer of \eqref{eq:variational-problem} also solves \eqref{eq:problem-force}.

The second part of \eqref{eq:optimality-1} is usually difficult to justify analytically. 
Hence we impose the following strong stablility condition for the minimizer $\bar{u}$, namely,
\begin{equation}\label{eq:strong-stab}
\exists~ \bar{c} > 0 ~~\text{ s.t. }\quad
\big\< \delta^2 \E(\bar u) v, v\big\> \geq \bar{c}
\| Dv \|_{\ell^2_\gamma}^2	\qquad \forall v \in\UsH.
\end{equation}
The constants $\bar{c}$ has mild dependence on the parameter $\gamma$. 
Nevertheless, since all norms $\|\cdot\|_{\ell^2_{\gamma}}$ are equivalent, we hereafter ignore this dependence.

The following result from \cite{chenpre_vardef} gives the decay estimates for the equilibrium state for point defects.

\begin{lemma}	\label{lemma:regularity}
	Let $\gamma>0$.
	If $\bar u \in \Adm_0$ is a strongly stable solution to \eqref{eq:variational-problem} 
	in the sense that \eqref{eq:strong-stab} is satisfied,
	then there exists a constant $C > 0$ such that
	\begin{align}\label{decay-estimate}
	|D\bar{u}(\ell)|_\gamma \leq C (1+|\ell|)^{-d}. 
	\end{align}
\end{lemma}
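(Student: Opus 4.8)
The plan is to upgrade the a priori summability $\|D\bar u\|_{\ell^2_\gamma}<\infty$, which we already have from $\bar u\in\Adm_0\subset\UsH$, to the sharp pointwise rate $(1+|\ell|)^{-d}$ by comparing the nonlinear lattice equilibrium \eqref{eq:problem-force} with a translation-invariant homogeneous operator whose lattice Green's function decays at the linear-elasticity rate. The two structural ingredients that drive this are the exponential locality of the site-energy derivatives from Lemma \ref{lemma-thermodynamic-limit} and estimate \eqref{site-locality-tdl}, and the strong stability \eqref{eq:strong-stab}, which transfers to coercivity of the homogeneous linearised operator.

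First I would linearise the equilibrium equation. Introducing the homogeneous reference operator $\Ls:=\delta^2\E^{\rm hom}(0)$ associated with the defect-free lattice $\Lhom$, and using $\delta\E(\bar u)=0$, I would recast $\Ls\bar u=g$ with $g=(\Ls-\delta^2\E(0))\,\bar u-\delta\E(0)-\mathcal{Q}(\bar u)$, where the effective source splits into three contributions: (i) the reference residual $\delta\E(0)$, which is nonzero only because of the defect and, by \eqref{ass:ref_config} and the locality \eqref{site-locality-tdl}, is supported in $B_{\Rcore}$ up to exponentially small tails; (ii) the operator mismatch $(\Ls-\delta^2\E(0))\bar u$ between the defective and homogeneous linearised stencils, again localised near the core by the exponential decay in Lemma \ref{lemma-thermodynamic-limit}; and (iii) the nonlinear remainder $\mathcal{Q}(\bar u)$ of the Taylor expansion of the $V_\ell$ about $0$, which is quadratic in $D\bar u$ and hence strictly of higher order. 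Translation invariance of the energy (equivalently $\Ls\mathbf 1=0$) then yields the momentum balance $\sum_{\ell\in\L}g(\ell)=0$, and this cancellation is exactly what produces one extra order of decay.

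Next I would quantify the decay of the lattice Green's function $\mathcal{G}$ of $\Ls$. Because $\Ls$ is translation invariant, has an exponentially localised stencil, and --- by the lattice stability inherited from \eqref{eq:strong-stab} --- is elliptic with the acoustic symbol of linear elasticity, a discrete Fourier analysis on the Brillouin zone combined with a Combes--Thomas type argument should give $|D_\rho\mathcal{G}(\ell)|\lesssim(1+|\ell|)^{1-d}$ and $|D_\rho D_\sigma\mathcal{G}(\ell)|\lesssim(1+|\ell|)^{-d}$. Representing $\bar u=\mathcal{G}*g$ and differencing, $D_\rho\bar u(\ell)=(D_\rho\mathcal{G}*g)(\ell)$; the zero-sum property lets me replace $D_\rho\mathcal{G}(\ell-m)$ by $D_\rho\mathcal{G}(\ell-m)-D_\rho\mathcal{G}(\ell)$, i.e. convolve against a second difference of $\mathcal{G}$, so that the compactly supported part of $g$ already yields $|D\bar u(\ell)|_\gamma\lesssim(1+|\ell|)^{-d}$. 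A bootstrap then closes the argument: feeding each provisional decay rate for $D\bar u$ back into the quadratic term (iii) and into the exponential tails of (i)--(ii) strictly improves the exponent, and since the nonlinearity is subcritical the iteration saturates precisely at $(1+|\ell|)^{-d}$, while the weight $e^{-2\gamma|\rho|}$ in $|\cdot|_\gamma$ absorbs the summation over $\rho$.

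I expect the principal difficulty to lie in the sharp pointwise Green's-function estimate and in closing the bootstrap. Establishing $|D_\rho D_\sigma\mathcal{G}(\ell)|\lesssim(1+|\ell|)^{-d}$ uniformly requires controlling $\widehat{\Ls}(\xi)^{-1}$ both near the origin of the Brillouin zone, where only the ellipticity of the acoustic symbol is available, and away from it, where the exponential locality of the stencil must be exploited, and then matching the near-core and far-field regimes so that the operator-mismatch and nonlinear parts of $g$ do not degrade the rate. A subtlety specific to $d=2$ is that $\mathcal{G}$ itself grows only logarithmically, so the whole argument must be carried out at the level of the differences $D\mathcal{G}$ rather than $\mathcal{G}$ in order to avoid the logarithm and recover the clean polynomial decay.
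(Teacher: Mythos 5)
First, a structural remark: the paper never proves Lemma~\ref{lemma:regularity} --- it is imported directly from \cite{chenpre_vardef} (see also \cite{2013-defects}), so there is no in-paper argument to compare against. Your sketch is in fact a reconstruction of the strategy used in those references: linearise about the homogeneous far-field operator $\Ls$, isolate an exponentially localised defect source via Lemma~\ref{lemma-thermodynamic-limit} and \eqref{site-locality-tdl}, invoke lattice Green's function asymptotics $|D\mathcal{G}(\ell)|\lesssim(1+|\ell|)^{1-d}$, $|DD\mathcal{G}(\ell)|\lesssim(1+|\ell|)^{-d}$, use the zero-net-force/divergence structure to gain one order, and bootstrap the nonlinearity. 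So the route is the right one; the issue is whether the sketch closes.

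It does not quite close, and the gap is concrete: the \emph{initialisation} of the bootstrap. At the outset you only know $\|D\bar u\|_{\ell^2_\gamma}<\infty$, which gives $|D\bar u(\ell)|_\gamma\to 0$ but no pointwise rate. Your iteration --- insert $|D\bar u|\lesssim(1+|\ell|)^{-\alpha}$ into the quadratic remainder $\mathcal{Q}(\bar u)\lesssim(1+|\ell|)^{-2\alpha}$ and convolve with $D\mathcal{G}$ --- yields, after the kernel-difference trick and a near/far splitting of the convolution, a new exponent of the form $\min(d,\,2\alpha-1)$ up to logarithms; the near-diagonal part $\sum_{|m-\ell|\le|\ell|/2}|D\mathcal{G}(\ell-m)|\,(1+|m|)^{-2\alpha}\lesssim(1+|\ell|)^{1-2\alpha}$ is what caps the gain. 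This map improves the exponent, and converges to the fixed point $d$, \emph{only if} the starting exponent satisfies $\alpha>1$; starting from no pointwise rate at all it never gets off the ground. In \cite{chenpre_vardef,2013-defects} precisely this step is where the technical work lies: an initial polynomial rate is extracted from the energy bound by a separate weighted/cut-off (Caccioppoli-type, dyadic-annulus) argument that uses the coercivity of $\delta^2\E(\bar u)$ itself, i.e.\ \eqref{eq:strong-stab} tested against truncations of $\bar u$ --- not the coercivity of $\Ls$. Two further points you should make explicit rather than assume: (i) your zero-sum replacement $D\mathcal{G}(\ell-m)\mapsto D\mathcal{G}(\ell-m)-D\mathcal{G}(\ell)$ is clean only for the exponentially localised parts of the source; the quadratic part is merely $\ell^1$, and for it the splitting/iteration above is unavoidable; (ii) the stability of $\Ls$ is not literally ``inherited'' from \eqref{eq:strong-stab} --- it follows by testing \eqref{eq:strong-stab} with displacements supported far from the core, where $|D\bar u|_\gamma$ is small, and the representation $\bar u=\mathcal{G}*g$ additionally needs a uniqueness statement for decaying solutions of $\Ls v=0$ in $\UsH$.
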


\subsection{QM/MM coupling}
\label{sec:qmmm}

\def\Adm{{\rm Adm}}
\def\E{\mathcal{E}}
\def\L{\Lambda}
\def\DD{{\sf D}}
\def\ee{{\sf e}}
\def\LQM{\Lambda^{\rm QM}}
\def\LMM{\Lambda^{\rm MM}}
\def\LFF{\Lambda^{\rm FF}}
\def\Lbuf{\Lambda^{\rm BUF}}
\def\OQM{\Omega^{\rm QM}}
\def\OMM{\Omega^{\rm MM}}
\def\OFF{\Omega^{\rm FF}}
\def\Obuf{\Omega^{\rm BUF}}
\def\RQM{R_{\rm QM}}
\def\RMM{R_{\rm MM}}
\def\RFF{R_{\rm FF}}
\def\Rbuf{R_{\rm BUF}}
\def\VMM{V^{\rm MM}}
\def\Vb{V^{\rcut}_{\#}}
\def\EH{\mathcal{E}^{\rm H}}
\def\uH{\bar{u}^{\rm H}}
\def\wD{\widetilde{D}}
\def\AH{\Adm^{\rm H}_0}
\def\Usx{\mathscr{U}^{\rm H}}
\def\uH{\bar{u}^{\rm H}}

To solve the variational problem \eqref{eq:variational-problem} approximately, we must restrict the infinite dimensional space $\Adm_0$ over $\L$ to a finite dimensional subspace over some bounded domain with  artificial boundary conditions. 
The significant computational cost (roughly speaking, cube of the degrees of freedom) drastically limits the system size which can be handled by the QM models (in this paper, the tight binding model). 
The QM/MM coupling schemes combine the accuracy of QM models with the low computational cost of MM models, and therefore allow simulations with much larger systems.

Generally speaking, QM/MM coupling schemes can be classified according to
whether they link the QM and MM regions on the level of energies or forces \cite{bernstein09,chen15b}: 
the energy-based methods build a hybrid total energy
functional and look for the minimizer of this functional; while the force-based methods solve the force balance equation with QM and MM contributions
and possibly with an interpolation between the two in a transition region. 
We will focus on energy-based methods in this paper, and all our analysis and algorithms can be generalized to force-based methods without too much difficulty. 

The first step of QM/MM algorithm is to decompose the reference configuration $\L$ into three disjoint sets,
$\Lambda = \LQM\cup \LMM\cup \LFF$,  where $\LQM$ denotes the QM region, $\LMM$ denotes the MM region, and $\LFF$ denotes the far-field region where atom positions will be frozen according to the far-field predictor.
Moreover, we define a buffer region $\Lbuf\subset\LMM$ surrounding $\LQM$ such that all atoms in $\Lbuf\cup\LQM$ are involved in the evaluation of the site energies in $\LQM$ using the tight binding model. (see Figure \ref{qmmmgeom} for a schematic plot for the case of a two dimensional point defect)
More precisely, we require
\begin{eqnarray}\label{buf}
B_{\rcut}(\ell) \subset \LQM\cup\Lbuf \qquad\forall~\ell\in\LQM
\end{eqnarray}
with some cutoff distance $\rcut>0$.
Due to the locality in Lemma \ref{lemma-thermodynamic-limit}, 
the error from truncation of the buffered layer $\Lbuf$  decays exponentially fast as $\rcut$ increases. Therefore, $E^{\Lbuf\cup\LQM}_{\ell}$ is a good approximation of $E_{\ell}$ for sufficiently large $\rcut$.
For simple cases, we can use balls centred at the defect core to decompose $\Lambda$, 
and use parameters $\RQM$, $\RMM$ and $\Rbuf(\geq\rcut)$ to represent the respective radii 
(see also Figure \ref{fig:qmmmgd} for a schematic plot).

In the MM region, we approximate the tight binding site potential $V_{\ell}$ by some MM site potential $\VMM(Du(\ell))$,
which will be constructed such that:  
(a) it is cheap to evaluate, usually an explicit function of the atomic configuration;
(b) it only depends on finitely many atoms within a finite range neighbourhood, say, only on sites in $B_{\rcut}(\ell)$;
(c) it is accurate enough when the local atomic configuration is close to perfect lattice.
Note that when $\ell\in \LMM$ is far away from defects, e.g. $\RQM>\Rcore+\rcut$,  the potential $\VMM$ becomes homogeneous and does not depend on $\ell$.
Typically, we can use a Taylor expansion with respect to the reference configuration $x_0$ as follows (see also \cite[eq. (36)]{chen15b}).
Define $\Vb:\big(\R^d\big)^{\mathcal{R}}\rightarrow\R$ as,
\begin{eqnarray*}
\Vb\big(D_{\mathcal{R}}u(\ell)\big) := E_{\ell}^{\L\cap B_{\rcut}(\ell)}(x_0+u)	\quad\forall ~ |\ell|>\Rcore+\rcut
\quad{\rm with}~~\mathcal{R}=B_{\rcut}\cap\big(\Lhom\backslash 0\big) .
\end{eqnarray*}
The MM potential is given by 
\begin{eqnarray}\label{taylor}
\VMM\big({\bm g}\big) 
:= \Vb({\bf 0}) + \sum_{j=1}^k \frac{1}{j!} \delta^j \Vb({\bf 0})\left[{\bm g}^{\otimes j}\right]
\quad{\rm with}~~k\geq 2,
\end{eqnarray}
where $\delta^j \Vb({\bf 0})\left[{\bm g}^{\otimes j}\right]$ denotes the $j$-th order variations,
e.g., $ \delta\Vb({\bf 0})\left[{\bm g}\right] = \langle\delta\Vb({\bf 0}),{\bm g}\rangle$ and
$\delta^2\Vb({\bf 0})\left[{\bm g}^{\otimes 2}\right] = \langle\delta^2\Vb({\bf 0}){\bm g},{\bm g}\rangle$.
This construction is used throughout the numerical experiments in Section \ref{sec:numerics}.

The QM/MM hybrid energy difference functional approximates the QM energy difference functional $\E$ by
\begin{eqnarray}\label{eq:hybrid_energy}
\quad \E^{\rm H}(u) 
= \sum_{\ell\in \LQM} 
\Big( V_{\ell}\big(D u(\ell)\big)  - V_{\ell}\big(\pmb{0}\big) \Big) 
+ \sum_{\ell\in \LMM\cup\LFF} 
\Big( \VMM\big(D u(\ell)\big) - \VMM\big(\pmb{0}\big) \Big) \qquad
\end{eqnarray}
and replace the admissible set $\Adm_0$ by 
\begin{eqnarray}\label{e-mix-space}
\Adm_{0}^{\rm H} := \Adm_{0} \cap \Usx 
\qquad \text{with} \qquad
\Usx := \left\{ u \in \UsH  ~\lvert~ u=0~{\rm in}~\Lambda^{\rm FF} \right\} .
\end{eqnarray}
Finally, the energy-based QM/MM energy coupling scheme, as an approximation of \eqref{eq:variational-problem},  is the following finite dimensional minimization problem.
\begin{eqnarray}\label{problem-e-mix}
\bar{u}^{\rm H} \in \arg\min\big\{ \mathcal{E}^{\rm H}(u) ~\lvert~ u\in \AH \big\},
\end{eqnarray}

Let $\uH$ be the approximate equilibrium state of \eqref{problem-e-mix}. The Taylor expansion construction of the MM site potential about the far-field lattice state (see \cite[\S 4.1]{chen15b})
and the decay estimate in Lemma \ref{lemma:regularity} lead to the convergence of $\uH$ to $\bar u$ and {\it a priori} error estimates with respect to the size of QM 
and MM regions (see \cite[\S 4.2]{chen15b}).
To be more precise, for a two dimensional triangular lattice with point defects, if the MM site potential is given by second order Taylor expansion \eqref{taylor}, 
then we have the following {\it a priori} error estimate for the QM/MM approximation \eqref{problem-e-mix} (a special case of \cite[Theorem 4.1]{chen15b})
\begin{eqnarray}\label{a_priori}
\|\uH-\bar{u}\|_{\UsH} \leq C\Big( \RQM^{-3} + \RMM^{-1} + \exp(-\kappa\rcut) \Big)
\end{eqnarray}
with some constants $C,\kappa>0$ independent of $\RQM$, $\RMM$ and $\rcut$. 
We observe immediately from this estimate that, to balance different contributions to the error and achieve (quasi) optimal computational costs, 
one should take $\RMM\approx\RQM^3$ for sufficiently large $\rcut \approx \log \RMM$.

In our analysis and algorithms, the MM potentials do not need to be restricted to the constructions \eqref{taylor} or those in \cite{chen15b}, 
it suffices to make the following assumption that the QM/MM approximation $\uH$ converges to the exact equilibria in the sense of
\begin{eqnarray}\label{ass:convergence_QMMM}
\lim_{\RQM\rightarrow\infty} \|\uH-\bar{u}\|_{\UsH} = 0.
\end{eqnarray}
This systematic convergence \eqref{ass:convergence_QMMM}  is a basic requirement for a reliable QM/MM scheme.

\section{A posteriori error estimates}
\label{sec:analysis}
\setcounter{equation}{0}

In this section, we derive an {\it a posteriori} error indicator for QM/MM approximations, and show its reliability such that the true error is bounded from above by the error indicator.
Furthermore, we design certain sampling techniques to improve the efficiency of evaluating the indicator in practical calculations.

\subsection{Residual estimates}
\label{sec:res}

For any solution $\uH\in\UsH$ of the QM/MM approximation \eqref{problem-e-mix}, we define the residual $\res{\uH}$ as a functional on $\UsH$:
\begin{eqnarray}\label{eq:res} \nonumber
\res{\uH}(v) := \big\< \delta\E(\uH),v \big\> 
= \sum_{\ell\in\L}\big\< \delta V_{\ell}(D\uH), Dv(\ell) \big\>,
\qquad\forall~v\in\UsH .
\end{eqnarray}
Let $\|\cdot\|_{-1}$ be the dual norm of $\UsH$, the following lemma indicates that $\|\res{\uH}\|_{-1}$ 
provides both lower and upper bounds of the approximation error.

\begin{lemma}\label{lemma:res}
	Let $\bar{u}$ and $\uH$ be the solutions of \eqref{eq:variational-problem} and \eqref{problem-e-mix}, respectively.
	If $\bar{u}$ is storngly stable in the sense of \eqref{eq:strong-stab}
	and $\RQM$ is sufficiently large, then there exist constants $c$ and $C$ such that
	\begin{eqnarray}\label{res-bound}
	c\|\bar{u}-\uH\|_{\UsH} \leq \|\res{\uH}\|_{-1} \leq C\|\bar{u}-\uH\|_{\UsH} .
	\end{eqnarray}
\end{lemma}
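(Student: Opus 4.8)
The plan is to exploit the first-order optimality of the exact equilibrium $\bar u$ together with the uniform stability and boundedness of the Hessian $\delta^2\E$ in a neighbourhood of $\bar u$. Since $\bar u$ solves \eqref{eq:variational-problem}, the first optimality condition in \eqref{eq:optimality-1} gives $\langle\delta\E(\bar u),v\rangle = 0$ for all $v\in\UsH$, so the residual may be rewritten as a difference of first variations, $\res{\uH}(v) = \langle\delta\E(\uH),v\rangle - \langle\delta\E(\bar u),v\rangle$. Writing $e := \uH - \bar u$ and applying the fundamental theorem of calculus along the segment $\bar u + te$, $t\in[0,1]$, this becomes
\[
\res{\uH}(v) = \int_0^1 \langle\delta^2\E(\bar u + te)\,e,\, v\rangle\,dt,
\]
which is the identity on which both bounds rest. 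As a preliminary I would verify that the whole segment lies in $\Adm_0$: this is guaranteed for $\RQM$ large because $\|e\|_{\UsH}$ is then small by \eqref{ass:convergence_QMMM} and $\bar u\in\Adm_{\mathfrak m}$ for some $\mathfrak m>0$, so $\E\in C^{\mathfrak n-1}$ with $\mathfrak n\ge 4$ makes the integrand well defined and continuous in $t$.

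For the upper bound I would use the regularity and locality of the site energies in Lemma \ref{lemma-thermodynamic-limit} to conclude that $\delta^2\E$ is bounded as a bilinear form on $\UsH\times\UsH$, uniformly over configurations in a fixed $\UsH$-neighbourhood of $\bar u$ (on which the accumulation parameter $\mathfrak m$ stays bounded below); that is, $|\langle\delta^2\E(w)\phi,\psi\rangle|\le C\|\phi\|_{\UsH}\|\psi\|_{\UsH}$. Inserting this into the integral identity with $\phi=e$ and taking the supremum over $\|v\|_{\UsH}\le 1$ immediately yields $\|\res{\uH}\|_{-1}\le C\|e\|_{\UsH} = C\|\bar u-\uH\|_{\UsH}$.

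For the lower bound I would transfer the strong stability \eqref{eq:strong-stab} from the single point $\bar u$ to the whole segment. By the $C^1$-dependence of $w\mapsto\delta^2\E(w)$ (again a consequence of $\mathfrak n\ge4$ and the locality estimate \eqref{site-locality-tdl}) there is $r>0$ such that $\langle\delta^2\E(w)v,v\rangle\ge\tfrac{\bar c}{2}\|Dv\|_{\ell^2_\gamma}^2$ for all $v\in\UsH$ whenever $\|w-\bar u\|_{\UsH}\le r$. Since $\|\uH-\bar u\|_{\UsH}\to0$ as $\RQM\to\infty$ by \eqref{ass:convergence_QMMM}, for $\RQM$ sufficiently large every point $\bar u + te$ satisfies $\|\bar u+te-\bar u\|_{\UsH}=t\|e\|_{\UsH}\le r$. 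Testing the integral identity with $v=e$ then gives
\[
\res{\uH}(e) = \int_0^1\langle\delta^2\E(\bar u + te)e,e\rangle\,dt \ge \tfrac{\bar c}{2}\|e\|_{\UsH}^2,
\]
while $\res{\uH}(e)\le\|\res{\uH}\|_{-1}\|e\|_{\UsH}$; dividing by $\|e\|_{\UsH}$ produces $\tfrac{\bar c}{2}\|\bar u-\uH\|_{\UsH}\le\|\res{\uH}\|_{-1}$, i.e. the claim with $c=\bar c/2$.

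The main obstacle is the lower bound, and specifically the uniform stability along the segment: strong stability is assumed only at $\bar u$, so the argument hinges on \emph{(i)} the convergence \eqref{ass:convergence_QMMM}, which forces $\|e\|_{\UsH}$ to be small (and is precisely why the hypothesis ``$\RQM$ sufficiently large'' appears), and \emph{(ii)} the Lipschitz/$C^1$ dependence of $\delta^2\E$ on the configuration, which is what lets the stability constant survive with the factor $\tfrac12$ on perturbed configurations. Both ultimately rest on the locality estimate \eqref{site-locality-tdl}; establishing the continuity of $\delta^2\E$ in the $\UsH$-operator norm (rather than merely pointwise) is the technical heart of the proof, since the sums defining the Hessian are infinite and must be controlled by the exponential decay furnished by Lemma \ref{lemma-thermodynamic-limit}.
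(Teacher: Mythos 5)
Your proposal is correct and follows essentially the same route as the paper's proof: both exploit first-order optimality $\langle\delta\E(\bar u),v\rangle=0$, the Lipschitz continuity/boundedness of $\delta^2\E$ near $\bar u$ (available since $\mathfrak n\ge 4$ gives $\E\in C^3$), the transfer of strong stability \eqref{eq:strong-stab} to a small ball around $\bar u$, and the convergence assumption \eqref{ass:convergence_QMMM} to place $\uH$ (and hence the connecting segment) inside that ball for $\RQM$ large. The only difference is presentational: you use the integral form of Taylor's theorem along the segment, whereas the paper invokes the pointwise mean value theorem with an intermediate configuration $w=t\bar u+(1-t)\uH$.
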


\begin{proof}
	Let $r>0$ be such that $B_r(\bar{u}) \subset \Adm_{\frak{m}}$ for some $\frak{m} > 0$.
	Since we have assumed $\mathfrak{n}\geq 4$, it follows from Lemma \ref{lemma-thermodynamic-limit} that $\E\in C^3(\Adm_0)$.	Therefore $\delta\E$ and $\delta^2\E$ are Lipschitz continuous in $B_r(\bar{u})$ with
	uniform Lipschitz constants $L_1$ and $L_2$, i.e., for any $w\in B_r(\bar{u})$
	\begin{align}
	\label{proof-4-1-2}
	\|\delta\E(\bar{u})-\delta\E(w)\|
	&\leq L_1\|D\bar{u}-Dw\|_{\ell^2_\gamma},
	\\[1ex]
	\label{proof-4-1-3}
	\|\delta^2\E(\bar{u})-\delta^2\E(w)\|
	&\leq L_2\|D\bar{u}-Dw\|_{\ell^2_\gamma} .
	\end{align}
	Using \eqref{ass:convergence_QMMM}, we can take $\RQM$ sufficiently large such that $\uH\in B_r(\bar{u})$.
	
	It follows from first order optimality \eqref{eq:optimality-1} and the Lipschitz continuity of $\delta\E$ \eqref{proof-4-1-2} that
	\begin{eqnarray*}
		\res{\uH}(v) = \big\< \delta\E(\uH)-\delta\E(\bar{u}) , v \big\>
		\leq L_1\|D\bar{u}-D\uH\|_{\ell^2_\gamma} \|Dv\|_{\ell^2_\gamma}
		\qquad\forall~v\in\UsH ,
	\end{eqnarray*}
	which leads to the lower bound estimate
	\begin{eqnarray}\label{proof-4-1-4}
	\|\res{\uH}\|_{-1} \leq C\|\bar{u}-\uH\|_{\UsH}
	\end{eqnarray}
	with the constant $C$ depending on $\gamma$ and $L_1$.
	
	For the upper bound estimate, the Lipschitz continuity of $\delta^2\E$ \eqref{proof-4-1-3} and the strong stability condition \eqref{eq:strong-stab} imply the existence of $\tilde{r}\in (0,r)$, such that for any $w\in B_{\tilde{r}}(\bar{u})$
	\begin{eqnarray*}
		\big\< \delta^2 \E(w) v, v\big\> \geq \frac{\bar{c}}{2}
		\| Dv \|_{\ell^2_\gamma}^2	\qquad \forall v \in\UsH.
	\end{eqnarray*}
	Note that \eqref{ass:convergence_QMMM} implies that for $\RQM$ large enough, $\uH\in B_{\tilde{r}}(\bar{u})$.
	Therefore,
	\begin{multline*}
	\quad
	\|\res{\uH}\|_{-1}\|D\bar{u}-D\uH\|_{\ell^2_\gamma}
	~\geq~ \res{\uH}(\bar{u}-\uH) = \big\< \delta\E(\uH)-\delta\E(\bar{u}) , \uH-\bar{u} \big\>
	\\[1ex]
	~=~ \big\< \delta^2\E(w)(\uH-\bar{u}) , \uH-\bar{u} \big\>
	~\geq~ \frac{\bar{c}}{2}\|D\bar{u}-D\uH\|^2_{\ell^2_\gamma},
	\qquad
	\end{multline*}
	where $w=t\bar{u}+(1-t)\uH$ with some $t\in(0,1)$.
	This leads to the estimate
	\begin{eqnarray}\label{proof-4-1-5}
	\|\res{\uH}\|_{-1} \geq c\|\bar{u}-\uH\|_{\UsH}
	\end{eqnarray}
	with some constant $c$ depending on $\gamma$ and $\bar{c}$.
	We complete the proof by combining \eqref{proof-4-1-4} and \eqref{proof-4-1-5}.
\end{proof}

\subsection{A practical a posteriori error indicator}
\label{sec:posteriori}

We observe from Lemma \ref{lemma:res} that an ideal {\it a posteriori} error indicator is 
\begin{eqnarray}\label{eq:ideal_posteriori_indicator}
\eta^{\rm ideal}(\uH) := \|\res{\uH}\|_{-1}.
\end{eqnarray}
The upper and lower bound estimate of the residual dual norm can ensure the reliability and efficiency of the error indicator.
However, the ideal error indicator \eqref{eq:ideal_posteriori_indicator} is not computable and can not be used directly in practice
since $\delta V_{\ell}(\uH)$ is very complicate to compute for a QM model.
The aim of this section is to construct an {\it a postriori} error indicator that can be computed from the QM/MM approximation $\uH$ with moderate 
computational cost and meanwhile, can control the error $\|\uH-\bar{u}\|_{\UsH}$ from above.

A natural idea is to use the force $f_{\ell}(\uH)$ (definded by \eqref{eq:force-Du}) to construct the error indicator, 
since
(a) the equilibrium state $\bar{u}$ satisfies the force balance equation $f_{\ell}(\bar{u})=0~(\forall~\ell\in\L)$, 
and hence $\big|f_{\ell}(\uH)\big|=\big|f_{\ell}(\uH)-f_{\ell}(\bar{u})\big|$ can be related to the error $\|\bar{u}-\uH\|$,
(b) comparing with $\delta V_{\ell}(\uH)$, the QM force $f_{\ell}(\uH)$ is much easier to compute 
by using the Hellmann-Feynman formula (see e.g. \cite{martin04}).

We show in the following theorem an {\it a posteriori} error indicator, which gives an upper bound of the error $\|\bar{u}-\uH\|_{\UsH}$.

\begin{theorem}\label{theorem:upperbound}
	Let $\bar{u}$ and $\uH$ be the solutions to \eqref{eq:variational-problem} and \eqref{problem-e-mix}, respectively.
	If $\bar{u}$ is storngly stable in the sense of \eqref{eq:strong-stab}
	and $\RQM$ is sufficiently large, then there exists a constant $C$ such that
	\begin{eqnarray}\label{eta-upperbound}
	\|\bar{u}-\uH\|_{\UsH} \leq C\eta(\uH) ,
	\end{eqnarray}
where 
\begin{eqnarray}\label{eq:error-indicator-fl}
\eta(\uH) := \left\{
\begin{array}{ll}
\displaystyle
\sum_{\ell\in\L} \log(2+|\ell|)\cdot\b|f_{\ell}(\uH)\b|, \quad & {\rm if} ~ d=2,
\\[1ex]
\displaystyle
\left(\sum_{\ell\in\L} \b|f_{\ell}(\uH)\b|^{\frac65}\right)^{\frac56},  & {\rm if}~ d=3.
\end{array} \right.
\end{eqnarray}
\end{theorem}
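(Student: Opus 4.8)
The plan is to reduce the force-based quantity $\eta(\uH)$ to the residual dual norm $\|\res{\uH}\|_{-1}$ and then invoke the bound already proved in Lemma~\ref{lemma:res}. By the left inequality in \eqref{res-bound} we have $\|\bar u-\uH\|_{\UsH}\le c^{-1}\|\res{\uH}\|_{-1}$, so it suffices to establish the single estimate $\|\res{\uH}\|_{-1}\le C\eta(\uH)$; combining the two gives \eqref{eta-upperbound}. The whole theorem is thus an estimate of the residual dual norm by a weighted sum of the computable forces.

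First I would rewrite the residual in terms of the forces. Since $\E$ is translation (isometry) invariant by Lemma~\ref{lemma-thermodynamic-limit}(ii), its first variation annihilates constants; summation by parts in \eqref{eq:res} together with \eqref{eq:force-Du} then yields the representation
\[
\res{\uH}(v) = \langle \delta\E(\uH), v\rangle = -\sum_{\ell\in\L} f_\ell(\uH)\cdot v(\ell), \qquad \sum_{\ell\in\L} f_\ell(\uH) = 0 .
\]
The vanishing of the total force is exactly translation invariance, and it is the key structural fact: it lets me replace $v(\ell)$ by the centred quantity $v(\ell)-v(\ell_0)$ for an arbitrary reference site $\ell_0$ without changing $\res{\uH}(v)$. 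Hence $\|\res{\uH}\|_{-1}=\sup_{\|Dv\|_{\ell^2_\gamma}\le 1}\big|\sum_{\ell} f_\ell(\uH)\cdot(v(\ell)-v(\ell_0))\big|$, and everything reduces to a discrete functional inequality controlling this pairing by $\|Dv\|_{\ell^2_\gamma}$.

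The two dimensions are then handled by the appropriate discrete Sobolev-type embedding. For $d=3$ the homogeneous space $\UsH$ embeds into $\ell^6$, i.e. $\|v\|_{\ell^6}\le C\|Dv\|_{\ell^2_\gamma}$ for the decaying representative; applying the discrete H\"older inequality with conjugate exponents $6$ and $6/5$ gives $|\res{\uH}(v)|\le \|f(\uH)\|_{\ell^{6/5}}\,\|v\|_{\ell^6}\le C\big(\sum_\ell |f_\ell(\uH)|^{6/5}\big)^{5/6}\|Dv\|_{\ell^2_\gamma}$, which is precisely $C\eta(\uH)\|Dv\|_{\ell^2_\gamma}$, and taking the supremum over $v$ closes this case. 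For $d=2$ the critical embedding into any $\ell^p$ fails, so I would instead use the logarithmic growth of the lattice Green's function: the discrete $\dot{H}^1$ energy of a dipole $\delta_\ell-\delta_{\ell_0}$ grows only like $\log(2+|\ell-\ell_0|)$, which gives the pointwise bound $|v(\ell)-v(\ell_0)|\le C\sqrt{\log(2+|\ell|)}\,\|Dv\|_{\ell^2_\gamma}\le C\log(2+|\ell|)\,\|Dv\|_{\ell^2_\gamma}$ (the finitely many sites near the origin being absorbed into $C$). Inserting this into the centred pairing and summing with the $\log$ weight yields $|\res{\uH}(v)|\le C\big(\sum_\ell \log(2+|\ell|)\,|f_\ell(\uH)|\big)\|Dv\|_{\ell^2_\gamma}=C\eta(\uH)\|Dv\|_{\ell^2_\gamma}$.

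The hard part will be the two-dimensional estimate: because $\dot{H}^1$ does not embed into $\ell^\infty$ or any $\ell^p$ in $d=2$, one cannot bound $\langle f,v\rangle$ by $\|Dv\|_{\ell^2_\gamma}$ uniformly, and the argument relies essentially on the neutrality $\sum_\ell f_\ell(\uH)=0$ to pass to difference (dipole) potentials whose energies are only logarithmically divergent. I would make this rigorous through the sharp growth estimates for the discrete lattice Green's function of the linearised operator, equivalently a weighted discrete Poincar\'e inequality, which is precisely where the weight $\log(2+|\ell|)$ in the indicator originates. Finally I would check that $\eta(\uH)$ is finite, which follows from the rapid spatial decay of $f_\ell(\uH)=f_\ell(\uH)-f_\ell(\bar u)$ away from the defect core and the QM/MM interface, guaranteed by the locality of Lemma~\ref{lemma-thermodynamic-limit} and the decay estimate of Lemma~\ref{lemma:regularity}.
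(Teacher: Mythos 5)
Your proposal is correct and follows essentially the same route as the paper's proof: reduce to bounding $\|\res{\uH}\|_{-1}$ via Lemma~\ref{lemma:res}, exploit the neutrality $\sum_{\ell\in\L} f_{\ell}(\uH)=0$ (translation invariance) to replace $v$ by a centred representative of $[v]$, and then conclude with the two-dimensional logarithmic pointwise bound and the three-dimensional $\ell^6$-embedding plus H\"older's inequality. The only difference is one of provenance: the paper simply cites these discrete embedding results (\cite[Proposition 12]{ortner12}, \cite[Theorem 2.2]{OrtnerSuli12}), whereas you propose to rederive the $d=2$ estimate from lattice Green's function (dipole-energy) growth, which is exactly the mechanism underlying the cited result.
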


\begin{proof}
	For a displacement $v\in\UsH$, we define the equivalence classes
	\begin{eqnarray*}
	[v] := \left\{ v+t ~:~ t \in \R^d \right\} .
	\end{eqnarray*}
	Due to the translation invariance in Lemma \ref{lemma:regularity} (ii), 
	there is no need to make the distinction between $v$ and $[v]$.
	More specifically, we have that for any $w\in[v]$, $Dv(\ell)=Dw(\ell)$
	and $f_{\ell}(v)=f_{\ell}(w),~\forall~\ell\in\L$.
	It follows from \cite[Proposition 12]{ortner12} and \cite[Theorem 2.2]{OrtnerSuli12} that
	for any $v\in\UsH$,
	\begin{eqnarray}
	\label{estimateOS_d2}
	&|v(\ell)-v(0)| \leq C\|v\|_{\UsH}\log(2+|\ell|)    & {\rm if}~d=2  ~~ {\rm and}
	\\[1ex]
	\label{estimateOS_d3}
	& \text{there exists a } v_0\in[v]  \text{ such that } v_0\in \ell^6  & {\rm if} ~d=3. 
	\end{eqnarray}
	
	For $d=2$, \eqref{estimateOS_d2} and the fact $\sum_{\ell\in\L}f_{\ell}(\uH) =0$ 
	imply that, for any $v\in\UsH$,
	\begin{multline*}
	\qquad
	\res{\uH}(v)  =  \<\delta\E(\uH),v\> = \sum_{\ell\in\L} f_{\ell}(\uH)v(\ell)
	= \sum_{\ell\in\L} f_{\ell}(\uH)\tilde{v}(\ell)
	\\
	\leq C \sum_{\ell\in\L} f_{\ell}(\uH)\|\tilde{v}\|_{\UsH}\log(2+|\ell|)  \leq C \eta(\uH)\|v\|_{\UsH},
	\qquad\qquad\qquad
	\end{multline*}
	where $\tilde{v}=v-v(0)\in[v]$.
	This inequality together with \eqref{res-bound} completes the proof of $d=2$ case.
	
	For $d=3$, we can choose $v_0\in[v]$ as in \eqref{estimateOS_d3} to obtain that,
	for any $v\in\UsH$,
	\begin{eqnarray*}
		\res{\uH}(v)  = \sum_{\ell\in\L} f_{\ell}(\uH)v_0(\ell) 
		\leq C \|f_{\ell}(\uH) \|_{\ell^{\frac65}} \|v_0\|_{\ell^6}  \leq C \eta(\uH)\|v\|_{\UsH},
	\end{eqnarray*}
	and use similar argument to complete the proof.
\end{proof}

\begin{remark}
	Although we have stated the {\it a posteriori} error indicators for both $d=2$ and $d=3$ cases in Theorem \ref{theorem:upperbound}, we will focus on the implementations of two dimensional systems in this paper.
	Three dimensional systems will be investigated in our future works.
\end{remark}

\begin{remark}\label{remark:lowerbound}
	By approximating $\|\res{\uH}\|_{-1}$ with the error indicator $\eta(\uH)$, 
	we keep only the upper bound estimate in \eqref{res-bound}, but may have lost the lower bound.
	The design of reliable and efficient error indicator (with both upper and lower bound estimates) for QM/MM schemes may require more involved constructions and analysis, and will be investigated in our future work. See \cite{HW_SY_2018_Efficiency_A_Post_1D} for a recent advance in this direction.
\end{remark}

The error indicator \eqref{eq:error-indicator-fl} is still not computable since 
the sum over $\ell\in\L$ is an infinite sum, 
and the force $f_{\ell}(\uH)$ is the tight binding (QM) force of the infinite body.
We use the cutoff radii $\rcut$ to truncate the simulation domain and compute the force, hence, the approximation of \eqref{eq:error-indicator-fl} could be written as:
\begin{eqnarray}\label{eta_posteriori}
\eta_{\rcut}(\uH) := \sum_{\ell\in\Omega_{\rm c}} \log(2+|\ell|)\cdot\b|f^{\rcut}_{\ell}(\uH)\b|
\qquad{\rm with}\qquad
\Omega_{\rm c} = \L\bigcap\Big(\bigcup_{\ell\in\LQM\cup\LMM}B_{\rcut}(\ell)\Big) ,
\end{eqnarray}
where $f^{\rcut}_{\ell}(\uH) :=f_{\ell}^{B_{\rcut}(\ell)}(\uH)$ is the force computed from 
a finite system in the ball $B_{\rcut}(\ell)$,  defined by \eqref{Fl-El}.
Thanks to the locality result in Lemma \ref{lemma:regularity}, the error of this approximated error indicator (compared with \eqref{eq:error-indicator-fl}) decays exponentially fast to 0 as $\rcut$ increases.

\subsection{A sampling strategy for the evaluation of error indicator}
\label{sec:sample}

By Theorem \ref{theorem:upperbound}, the error indicator \eqref{eq:error-indicator-fl}  bounds the error of the approximate equilibrium state $\uH$ from above.
It may be more useful to have some local error indicator, 
in order to direct us to adjust the QM and MM regions automatically.
And furthermore, the computational cost is still very expensive since the evaluation of the error indicator \eqref{eta_posteriori} requires the computation of the QM forces $f_{\ell}(\uH)$
at all sites $\ell\in\Omega$, which is very expensive in real simulations.

Therefore, we propose an strategy to partition the simulation domain into local elements and construct local error indicator on each element.
A good partition can also help us compute the {\it a posteriori} error indicator efficiently.
We can sample one or a few sites in each element, and compute the force on the sampled sites to represent the error distribution in this element.

We will focus on the two dimensional systems in this paper. The three dimensional implementation is in principle similar but technically more involved and we will leave it to future work.

We decompose the simulation domain $\Omega_{\rm c}$ in \eqref{eta_posteriori} with a partition $\T:=\{T\}$, such that $ \Omega_{\rm c}=\cup_{T\in\T} $.
We can then approximate $\eta_{\rcut}(\uH)$ by
\begin{eqnarray}\label{eta_sample}
	\nonumber
	\eta_{\rcut}(\uH) &=& \sum_{T\in\T}\sum_{\ell\in T} \log(2+|\ell|)\cdot\b|f^{\rcut}_{\ell}(\uH)\b| 
	\\[1ex]
	&\approx& \sum_{T\in\T} w(T) \log(2+|\tilde{\ell}(T)|)\cdot\b|f_{\tilde{\ell}(T)}^{\rcut}(\uH)\b|
	~=:~ \eta_{\rcut}^{\T}(\uH),
\end{eqnarray}
where $\tilde{\ell}(T)$ denotes the repatom of $T$, and $w(T)$ gives the weight of the element $T\in\T$ 
(e.g., one can take $w(T)$ be the number of sites in $T$, or the relative area of $T$). 
We use 
\begin{eqnarray}\label{eta_local}
 \eta_{\rcut}^{T}(\uH) = w(T) \log(2+|\tilde{\ell}(T)|)\cdot\b|f_{\tilde{\ell}(T)}^{\rcut}(\uH)\b|
\end{eqnarray}
to denote the local error indicator on element $T$, which will provide us the information 
for the model adjustments in the adaptive algorithm. Multiple repatoms within one simplex $T$ is also possible and can be chosen by, e.g., Gauss-Lobatto quadrature rule.

\begin{remark}
The motivation behind the partition and sampling is that the force distribution is smooth in most of the area, 
for example, in the area away from the QM region (see e.g. Figure \ref{qmmmfrc} ).
Therefore, the sampling can keep the accracy of the error indicator while at the same time significantly reducing the computational cost.
\end{remark}

The choice of partition $\mathcal{T}$ is crucial for the accuracy and efficiency of the evaluation of the error indicator. 
In this paper, we focus more on local point defects, and partition the simulation domains in polar coordinates. 
The following partition strategy generates a graded mesh along the radius direction. The efficiency of this strategy is shown by our numerical experiments for some prototypical problems (see Section \ref{sec:numerics}).

For two dimensional quasi spherically symmetric defect configuration, for example, single point defect or microcrack, or multiple point defects, we have the following algorithm.

\vskip 0.2cm

\begin{algorithm}[H]
\caption{Graded mesh generation}
\label{alg:grademesh}
\begin{enumerate}
	\item
	Let $n_{\theta}\in\Z_+$ and $\tau=2\pi/M$.
	Set $0=\theta_0 < \theta_1 < \cdots < \theta_{n_{\theta}} = 2\pi$ with $\theta_{j+1} = \theta_j + \tau$.
	
	\item
	Let $n_r\in\Z_+$ and $n_r = n_{\rm QM} + n^1_{\rm MM} + n^2_{\rm MM} + n_{\rm FF}$.
	Let 
	\begin{displaymath}
	\begin{array}{ll}
	 h_1\geq\cdots\geq h_{n_{\rm QM}}>0 , & \text{ from defect core to QM/MM interface }\\
	 0<h_{n_{\rm QM}+1}<\cdots<h_{n_{\rm QM}+n^1_{\rm MM}}  , & \text{ from QM/MM interface to the coarsest $T\in \mathcal{T}$} \\
	 h_{n_{\rm QM}+n^1_{\rm MM}+1}>\cdots>h_{n_r+n^1_{\rm MM}+n^2_{\rm MM}} >0 , & \text{ from the coarsest $T\in\mathcal{T}$ to MM/FF interface}\\
	 0<h_{n_r-n_{\rm FF}+1}<\cdots<h_{n_r} , & \text{ far field}
	\end{array}
	\end{displaymath}
	such that
	$\displaystyle \sum_{k=1}^{n_{\rm QM}} h_k = \RQM$, 
	$\displaystyle \sum_{k=1}^{n_r-n_{\rm FF}} h_k = \RMM$
	and $\displaystyle \sum_{k=+1}^{n_r} = \RMM + \rcut$.
	
	Set $0=r_0<r_1<\cdots<r_{n_r}$ with $r_k = r_{k-1} + h_k$.
	\item
	Let $\T=\{T_{ij}\}$, $T_{ij} = (r_{i-1}, r_i]\times(\theta_{j-1}, \theta_j]$ in polar coordinate
	with $i = 1,\cdots, n_r$ and $j = 1,\cdots, n_{\theta}$.
	Let $\tilde{\ell}_{ij}\in T_{ij}$ be the site that is closest to the centre of $T_{ij}$, 
	and $w(T_{ij})$ be the number of atoms that lies in $T_{ij}$. 
\end{enumerate}
\end{algorithm}
 



\begin{remark}
In the numerical experiments in Section \ref{sec:numerics}, we use the following parameters in Algorithm \ref{alg:grademesh}, 
\begin{align*}
	 &h_1=\cdots= h_{n_{\rm QM}}=1 , \\
	 &h_{n_{\rm QM}+j} = \frac12(\frac{\sum_{k=1}^{n_{\rm QM}+j-1}h_k}{n_{\rm QM}})^{1.5}+\frac12(\frac{\sum_{k=1}^{n_{\rm QM}+j}h_k}{n_{\rm QM}})^{1.5} ,
	 \quad{\rm for}~~1\leq j\leq n^1_{\rm MM},  \\
	&h_{n_{\rm QM}+n^1_{\rm MM}+j} = h_{n_{\rm QM}+n^1_{\rm MM}-j} ,
	\quad{\rm for}~~1\leq j\leq n^2_{\rm MM},\\
	&h_{n_r-n_{\rm FF}+j}=h_{n_{\rm QM}+j} ,
	\quad{\rm for}~~1\leq j\leq n_{\rm FF}.
\end{align*}
\end{remark}

\begin{remark}
	For more general defect configurations, one may generate adaptive mesh according to the error indicator, for example, starting from a coarse partition. The technical details will appear in our forthcoming paper.
\end{remark}

\section{Adaptive QM/MM algorithms}
\label{sec:adaptive}
\setcounter{equation}{0}
\setcounter{figure}{0}

In this section, we design an adaptive QM/MM algorithm for crystalline defects based on the {\it a posteriori} error indicator \eqref{eta_local}.
The basic idea of the adaptive method  is to repeat the following procedure before reaching the required accuracy:
$$
\mbox{Solve}~\rightarrow~\mbox{Estimate}~\rightarrow~
\mbox{Mark}~\rightarrow~\mbox{Refine}.
$$

Given a partition $\LQM$ and $\LMM$, the ``Solve" step computes the approximate equilibrium state $\uH$ by solving \eqref{problem-e-mix}.
The ``Estimate" step computes the {\it a posteriori} error indicators \eqref{eta_sample} and \eqref{eta_local}.
The ``Mark" step uses some adaptation strategy to choose sets $\Mrefine$ for model refinement, and here it refers to the change of $\LQM$/$\LMM$ interface and $\LMM$/$\LFF$ interface.
We choose the following  D\"{o}rfler strategy, which is a widely used marking strategy to enforce error reduction. 

\begin{algorithm}[H]
\caption{D\"{o}rfler Strategy.}
\label{alg:dorfler}
\quad Prescibe $0<\tau<1$.
\begin{enumerate}
	\item 
	Choose the minimum set $\Mrefine\subset\L$ such that the following D\"{o}rfler properties are satisfied
	\begin{eqnarray}\label{dorfler-strategy}
	\sum_{T\subset \Mrefine}\eta^T_{\rcut}(\uH)  \geq \tau \sum_{T\in \mathcal{T}} \eta^T_{\rcut}(\uH) .
	\end{eqnarray}
	\item 
	Mark all the sites in $\Mrefine$ (for refinement).
\end{enumerate}
\end{algorithm}

\def\dist{\textrm{dist}}

To select the minimum set $\Mrefine$, 
we first sort $\eta_T(\uH)$ in descending order for $T\in\mathcal{T}$, 
then compute the partial sums of the sorted sequence until \eqref{dorfler-strategy} is satisfied.
In the QM/MM coupling, we should further decompose the marked set into two nonintersecting subsets
\begin{eqnarray*}
\Mrefine = \Mrefine^{\rm QM} \cup \Mrefine^{\rm MM}, \qquad  \Mrefine^{\rm QM} \cap \Mrefine^{\rm MM} = \emptyset.
\end{eqnarray*}
We will see from the numerical tests (see Figure \ref{fig:side:a} and \ref{fig:qmmmgd} (b)) 
that the force distribution are mainly concentrated near the $\LQM/ \LMM$ interface and $\LMM/\LFF$ interface.
Therefore, the simplest way to determine this decomposition is by the position of the atomic site $\ell\in\Mrefine$: 
if $\dist (\ell, \LQM) < \dist(\ell, \LFF)$ , then $\ell$ goes into $\Mrefine^{\rm QM}$; otherwise, $\ell$ goes into $\Mrefine^{\rm MM}$.
%
Then, with the D\"{o}rfler adaptation strategy, $\Mrefine$ consequently lies in two (separated) regions around two interfaces, which can easily be decomposed into the QM and the MM parts.
%
\begin{remark}
\label{rem:generalalg}
The decomposition of $\Mrefine$ into QM and MM parts requires the assumption that the errors are concentrated around $\LQM/ \LMM$ and $\LMM/\LFF$ interfaces. This seems to be ad hoc, which may not work for general defect configurations. But the main purpose of this paper is to develop an analytical framework for adaptive QM/MM computation, and justify it numerically by some prototypical examples, such as the single vacancy and two separated vacancies in Section \ref{sec:numerics} where such an assumption holds. A more general numerical approach may need to combine ideas such as stress based error indicator from adaptive atomistic/continuum coupling method \cite{Wang:2017,Liao2018}, and will be investigated in our future work. 
\end{remark}

\begin{remark}
	One can also use the so-called maximum marking strategy for the ``Mark" step.
	The maximum strategy chooses a $T_{\rm max}\in \mathcal{T}$, such that  
	\begin{eqnarray*}\label{maximum-strategy}
		\eta_{\rcut}^{T_{\rm max}}(\uH) = \max_{T\in\mathcal{T}}\eta_{\rcut}^T(\uH) .
	\end{eqnarray*}
	and 	$\Mrefine$ contains all sites in $T_{\rm max}$. In this paper we will stick to the D\"{o}rfler strategy, which behaves more efficiently in all our numerical examples. 
\end{remark}

Once the marked sets $\Mrefine^{\rm QM}$ and $\Mrefine^{\rm MM}$ are determined, 
the ``Refine" step adjusts the domain decomposition accordingly for the next ``Solve" step. 
Note that usually more atomic sites than that in the marked sites $\Mrefine$ are refined, in order to keep the QM and MM regions regular.

The adaptive QM/MM algorithm is given as follows.

\begin{algorithm}[H]
\caption{Adaptive QM/MM algorithm}
	\label{alg:main}	
	\quad
	\begin{enumerate}		
		\item
		Prescribe $\varepsilon_{\rm tol}>0$, $\mathrm{r}\in(0,1)$, $N_{\rm QM}^{\max}$, $N_{\rm MM}^{\max}$ and $\Rbuf$.
		Initialize $\LQM$ and $\LMM$.
		Construct $\Lbuf\subset\LMM$ such that \eqref{buf} is satisfied.
		\item
		If $\#\LQM>N_{\rm QM}^{\max}$ or $\#\LMM>N_{\rm MM}^{\max}$,  \textbf{STOP}; 
		otherwise solve \eqref{problem-e-mix} to obtain $\bar{u}^{\rm H}$. 
		\item
		Compute the error indicator $\eta_{\rcut}^{\T}(\uH)$ in \eqref{eta_sample} and $\eta_{\rcut}^{T}(\uH)$ in \eqref{eta_local} for each $T\in\mathcal{T}$.
		If  $\eta_{\rcut}(\uH)<\varepsilon_{\rm tol}$, \textbf{STOP}; otherwise, go to Step 4.
		\item
		Use D\"{o}rfler Strategy to construct $\Mrefine$, and  decompose $\Mrefine$ into $\Mrefine^{\rm QM}$ and $\Mrefine^{\rm MM}$. 
		\item
		Construct new $\LQM$, $\LMM$ and $\Lbuf$ such that 
		$\LQM\supset\Mrefine^{\rm QM}$, $\LMM\supset\Mrefine^{\rm MM}$ and \eqref{buf} is satisfied, go to Step 2.
	\end{enumerate}
\end{algorithm}

\section{Numerical experiments}
\label{sec:numerics}
\setcounter{equation}{0}
\setcounter{figure}{0}

In this section, we will complement our theoretical analysis with numerical experiments. 
We consider two-dimensional triangle lattice $\L^{\textrm{hom}}:=\mA\Z^{2}$ with
embedded local point defects, where
\begin{equation}
\mA = \mymat{1 & \cos(\pi/3) \\ 0 & \sin(\pi/3)}.
\label{eq:Atrilattice}
\end{equation}
For the tight-binding model, we use a simple toy model with the Hamiltonian given in \eqref{tb-H-elements}, where the onsite term is $h_{\rm ons} = 0$, and the hopping term is given by the Morse potential
\begin{eqnarray*}
h_{\rm hop} (r) =  e^{-4(r-1)} \quad{\rm for}~~r>0.
\end{eqnarray*}

\noindent
{\bf Example 1.} (Single vacancy)
Consider a single vacancy located at the origin with $\L = \Lhom\backslash\{\pmb 0\}$.

We first use pure QM (tight binding) calculations to verify the convergence and reliability of our {\it a posteriori} error indicator,
where the QM subsystems is embedded directly in a bulk environment (without any MM subsystems). The geometry of the partition of QM and far field regions are shown in Figure \ref{fig:qmgeom}, where red atoms are simulated by QM model surrounded by far field atoms colored in green. 
We still denote the approximate equilibrium solution by $\uH$.

We observe from Figure \ref{fig:qmape} that our error indicator decays in the same rate as 
$\|\bar{u} - \uH||_{\UsH}$ while the QM region increases. This not only supports the reliability (upper bound estimate) in Theorem \ref{theorem:upperbound}, but also shows the efficiency (lower bound estimate) of our error indicator.
We also compare the decay of error indicators with different cutoff $\rcut$ in Figure \ref{fig:side:b}.
It is observed that the choice of $\rcut$ does affect the reliability of our error indicator.
From our numerical simulations, we see that $\rcut=5$ or 6 is good enough
and will be used throughout the following numerical experiments. 

We present the force distribution (with respect to the radii) in Figure \ref{fig:side:a}. 
Instead of using $f_{\ell}^{\rcut}(\uH)$ in \eqref{eta_posteriori}, we plot the force $f_{\ell}^{B_R(0)}(\uH)$ computed on a very large simulation domain with $R\gg\RQM+\rcut$.
Note that $f_{\ell}^{B_R(0)}(\uH)$ makes very accurate approximation of the true QM force $f_{\ell}(\uH)$ in the thermodynamic limit.
The figure shows that the force are more concentrated around the interface between QM and far field regions.

\begin{figure}[!htb]
	\centering 
	\subfigure[Partition of the QM and far field region.]{
		\label{fig:qmgeom}
		\includegraphics[height=6cm]{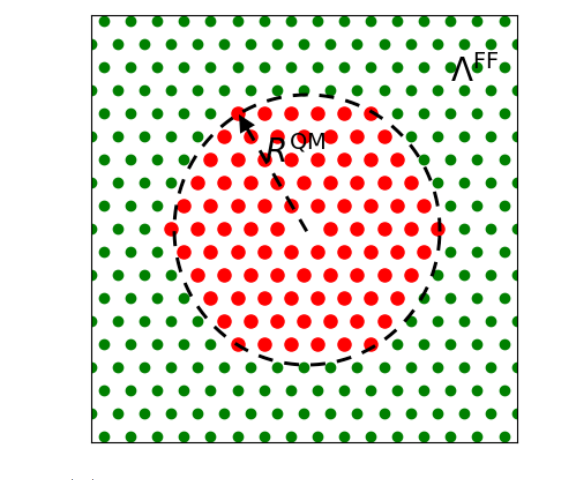}}
	\hspace{0.2cm} 
	\subfigure[Verification of the {\it a posteriori} error indicator.]{
		\label{fig:qmape} 
		\includegraphics[height=6cm]{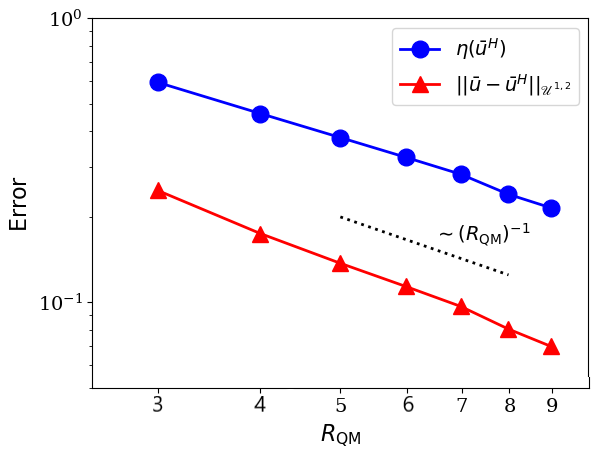}}
	\caption{Comparison of the {\it a posteriori} error indicator and $\|u - \bar{u}\|_{\UsH}$ with pure QM calculations.}
	\label{fig:pureQM}
\end{figure} 

\begin{figure}[!htb]
	\begin{minipage}[t]{0.5\linewidth}
		\centering
		\includegraphics[scale=0.5]{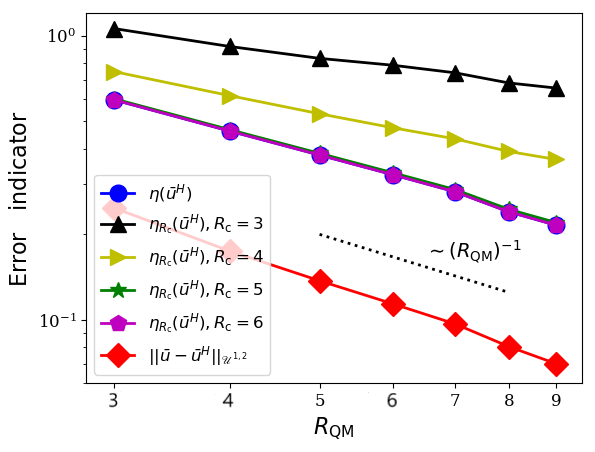}
		\caption{Error indicators with different $\rcut$. 
		}
		\label{fig:side:b}
	\end{minipage}
	\begin{minipage}[t]{0.5\linewidth}
		\centering
		\includegraphics[scale=0.5]{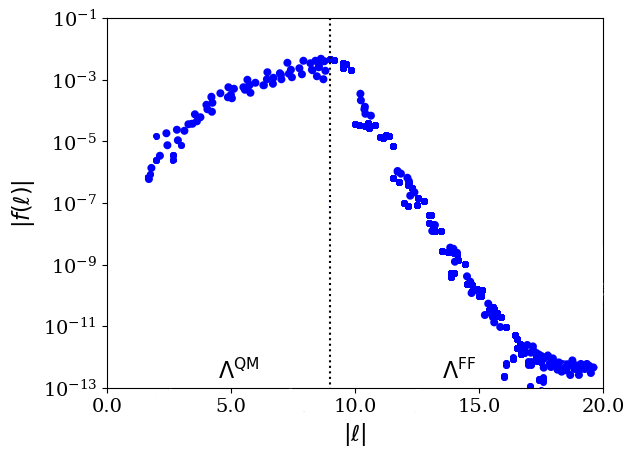}
		\caption{Force distribution along the radii with a pure QM calculation.}
		\label{fig:side:a}
	\end{minipage}
\end{figure}

We show the QM/MM domain decomposition in Figure \ref{qmmmgeom},  and the corresponding force distribution (with fixed partition)
with respect to the radius direction in Figure \ref{qmmmfrc}. Similar to Figure \ref{fig:side:a}, we plot the force $f_{\ell}^{B_R(0)}$ on a very large system with $R\gg\RMM+\rcut$.
We observe that forces are mainly concentrated around the $\LQM/ \LMM$ interface and the $\LMM/\LFF$ interface.
This motivates the adaptive Algorithm \ref{alg:main}, which will assigns more sample points near those two interfaces.
We show the sample points in Figure \ref{gmsamp} and the elapsed time for the evaluation of the error indicator with/without sampling algorithm in Figure \ref{fig:surf_single}.
It is clear that the evaluation time for the error indicator with sampling \eqref{eta_sample} is significantly reduced compared with that without sampling \eqref{eta_posteriori}. 
Furthermore, the sampling algorithm does not affect the accuracy of the error indicators (see Figure \ref{singlegooderr}).

\begin{figure}[!htb]
	\centering 
	\subfigure[Partition of the domain.]{
		\label{qmmmgeom}
		\includegraphics[scale=0.55]{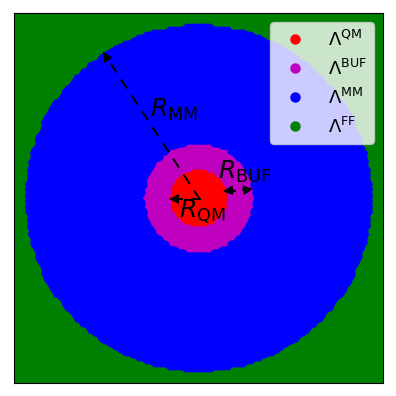}}
	\hspace{0.2cm} 
	\subfigure[Force distribution along the radii with a QM/MM coupling.]{
		\label{qmmmfrc} 
		\includegraphics[scale=0.5]{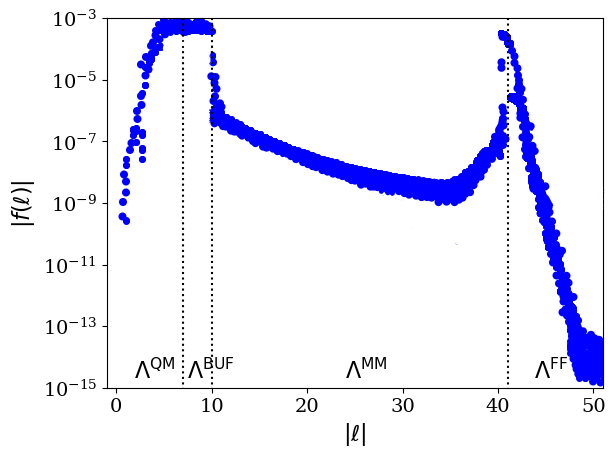}}
	\caption{Domain decomposition in the QM/MM coupling scheme, and force distribution along the radii.}
	\label{fig:qmmmgd}
\end{figure} 

\begin{figure}[!htb]
	\centering 
	\subfigure[Graded mesh sampling.]{
		\label{gmsamp}
		\includegraphics[scale=0.58]{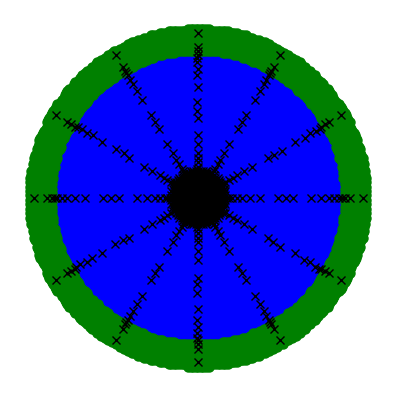}}
	\subfigure[Computing time for the {\it a posteriori} error indicator.]{
		\label{fig:surf_single}
		\includegraphics[scale=0.51]{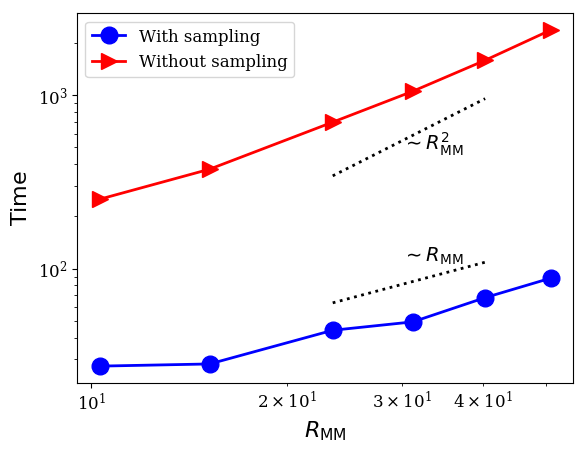}}	        
	\caption{Sampling points for the {\it a posteriori} error indicator, and the scaling of computational time.}
	\label{fig:samp}
\end{figure}

We then perform the adaptive algorithm (Algorithm \ref{alg:main}) to compute the single vacancy example. In each ``Solve" step, the computational cost is proportional to $N_{\rm QM}^3+N_{\rm MM}$,
as the cost to solve the tight binding model scales cubically and the cost to solve the MM model scales linearly with respect to the number of atoms.
The decay curves for the errors of QM/MM solutions and the {\it a posteriori} error indicators are shown in Figure \ref{singlegooderr}, as a function of $N_{\rm QM}^3+N_{\rm MM}$.
The relation between $N_{\rm QM}$ and $N_{\rm MM}$ during the adaptation process is shown in Figure \ref{singlegoodpath}, from which we observe that our adaptive algorithm can achieve optimal computational complexity.

\begin{figure}[!htb]
	\centering 
	\subfigure[Decay of the error in the adaptive algorithm.]{
		\label{singlegooderr}
		\includegraphics[scale=0.5]{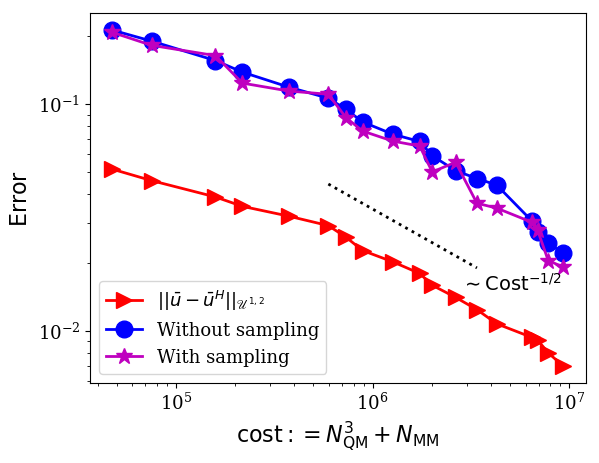}}
	\hspace{0.2cm} 
	\subfigure[Relation between $\RQM$ and $\RMM$ in the adaptive algorithm.]{
		\label{singlegoodpath} 
		\includegraphics[scale=0.5]{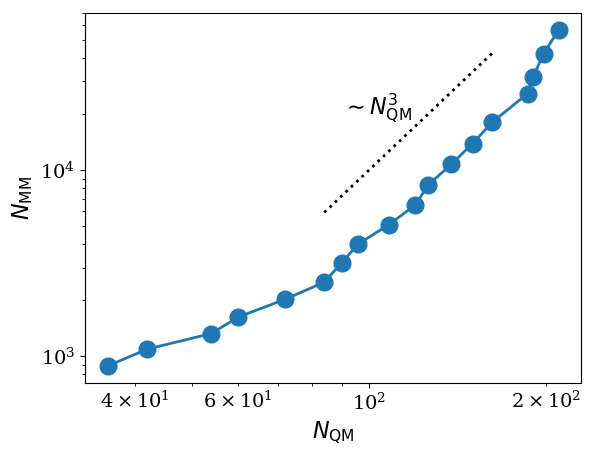}}
	\caption{Convergence of the adaptive algorithm, and the scaling of QM and MM radius.}
	\label{singlegood}
\end{figure}

\noindent
{\bf Example 2.} (Two separated vacancies)
Consider two vacancies that are away from each other (see Figure \ref{tsvgeom}).
%
Since the system has quasi-spherical symmetry away from the defects, we are still able to apply our graded mesh algorithm. We show one QM/MM partition in Figure \ref{tsvgeom} and the distribution of the sample points in Figure \ref{tsvape}, which are selected with respect to each vacancy core. 
Here we adapt the graded mesh generation Algorithm \ref{alg:grademesh} such that the sampling points in the left half plane are generated by the vacancy on the left, and the sampling points in the right half plane are generated by the vacancy on the right. See Remark \ref{rem:generalalg} for discussions on a more general approach.


In our adaptive simulations, the initial geometry contains two isolated QM regions.
The adaptive algorithm adjust the QM and MM regions automaticlly according to the error indicators.
We show the evolution of the QM/MM partitions during the adaptation process in Figure \ref{fig:TSVsteps},
and observe the merge and growth of QM subsystems as $N_{\rm QM}$ increases. 
We plot the the error indicators and true approximation errors in Figure \ref{tsvgooderr}, 
which shows the accuracy of our adaptive algorithm and the efficiency of the sampling techniques.
The relation between $N_{\rm QM}$ and $N_{\rm MM}$ is shown in Figure \ref{tsvgoodpath},  which implies that our adaptive algorithm can give optimal computational scaling.

\begin{figure}[htb]
	\centering 
	\subfigure[Geometry of two separated vacancies and QM/MM decompositions.]{
		\label{tsvgeom}
		\includegraphics[height=6cm]{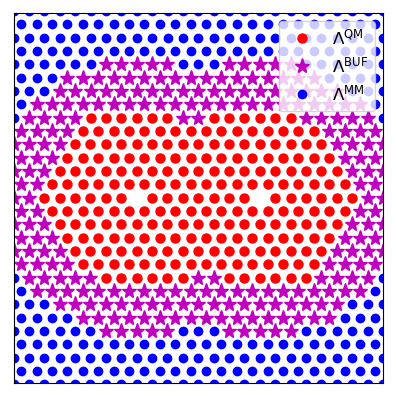}}
	\hspace{0.2cm} 
	\subfigure[Sample points for two separated vacancies.]{
		\label{tsvape} 
		\includegraphics[height=6cm]{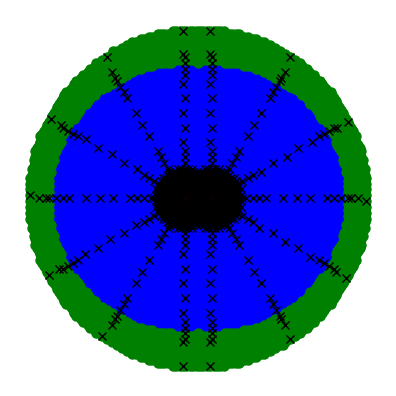}}
	\label{tsv2}
	\caption{Geometry of the QM/MM decompositions and corresponding sample points.}
\end{figure} 

\begin{figure}[htb]
	\centering 
	\includegraphics[height=4cm]{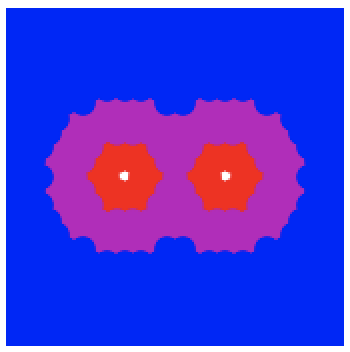}
	\includegraphics[height=4cm]{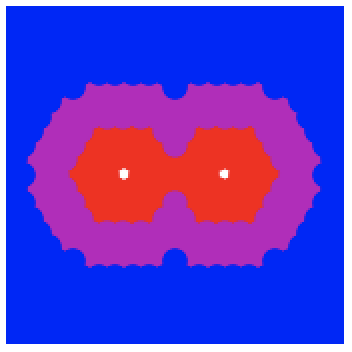}
	\includegraphics[height=4cm]{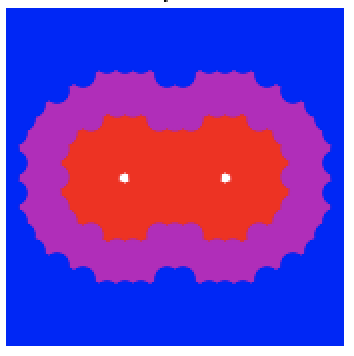}
	\includegraphics[height=4cm]{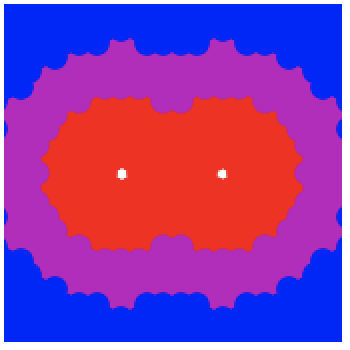}
	\caption{Evolution of QM and MM partition in the adaptation process. $N_{\rm QM}$, the numbers of atomic sites in the QM region are  36,  71,  95,  and  133 from left to right.}
	\label{fig:TSVsteps}
\end{figure} 

\begin{figure}[htb]
	\centering 
	\subfigure[Convergence curves of the error indicators and approximation erros.]{
		\label{tsvgooderr}
		\includegraphics[scale=0.53]{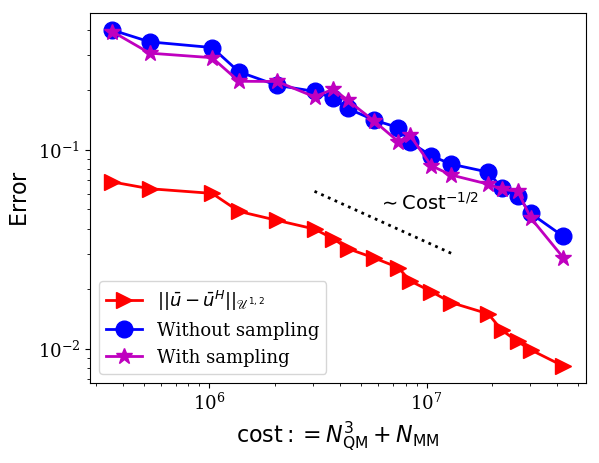}}
	\hspace{0.2cm} 
	\subfigure[$N_{\rm QM}$ vs $N_{\rm MM}$]{
		\label{tsvgoodpath} 
		\includegraphics[scale=0.53]{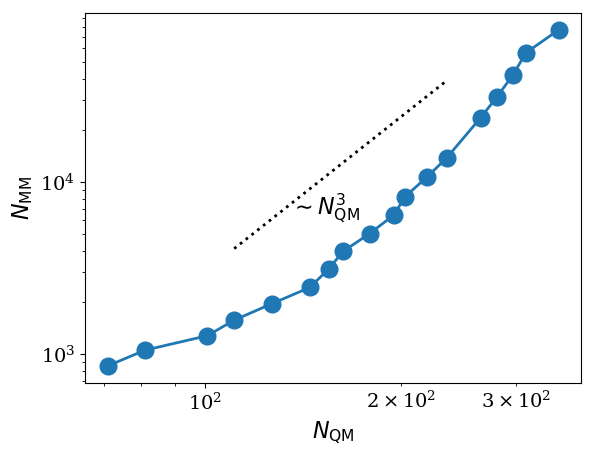}}
	\caption{Convergence of the adaptive algorithm, and the scaling of QM and MM radius.}
	\label{tsvgood}
\end{figure}

\section{Conclusions}
\label{sec:conclusion}
\setcounter{equation}{0}

In this paper, we provide an {\it a posteriori} error indicator for QM/MM coupling approximations, and design an adaptive algorithm for crystalline solids with embedded defects.
The error indicator not only gives an upper bound for the approximation error of the geometry equilibration, but also allows to adjust the QM/MM decomposition on the fly.
Moreover, the error indicator can be computed efficiently with a sampling algorithm.
We conclude that,
(a) more flexible sampling methods are required to compute the error indicator for more general defected systems, 
(b) our method is potentially more efficient and important for dynamic problems (with moving defects), where a coarsening process should be applied. 
These issues will be investigated in our future work.

\subsubsection*{Acknowledgements}

We are grateful to Christoph Ortner and Julian Braun from University of Warwick for stimulating discussions regarding this work.

\appendix
\renewcommand\thesection{\appendixname~\Alph{section}}

\small
\bibliographystyle{plain}
\bibliography{posterioriQMMM.bib}

\end{document}